\newtheorem{thm}{Theorem}[section]
\newtheorem{lem}[thm]{Lemma}
\newtheorem{cor}[thm]{Corollary}
\newtheorem{prop}[thm]{Proposition}
\newtheorem{conj}[thm]{Conjecture}
\newtheorem{rmk}[thm]{Remark}
\newtheorem{thm-con}[thm]{Theorem-Conjecture}
\numberwithin{equation}{section}
\theoremstyle{definition}
\newtheorem{defn}[thm]{Definition}
\newcommand{\f}{\Bbb F}
\newcommand{\F}{\Bbb F}
\newcommand{\cW}{\mathcal{W}}
\newcommand{\Ker}{{\rm Ker}}
\newcommand{\im}{{\rm Im}}
\newcommand{\cL}{{\mathcal L}}
\newcommand{\cA}{{\mathcal A}}
\newcommand{\Tr}{{\rm Tr}}
\newcommand*\colvec[3][]{
    \begin{pmatrix}\ifx\relax#1\relax\else#1\\\fi#2\\#3\end{pmatrix}
}
\begin{document}

\title[On the $c$-differential uniformity of certain maps over finite fields]{On the $c$-differential uniformity of certain maps over finite fields} 

\author[S. U. Hasan]{Sartaj Ul Hasan}
\address{Department of Mathematics, Indian Institute of Technology Jammu, Jammu 181221, India}
\email{sartaj.hasan@iitjammu.ac.in}

\author[M. Pal]{Mohit Pal}
\address{Department of Mathematics, Indian Institute of Technology Jammu, Jammu 181221, India}
\email{2018RMA0021@iitjammu.ac.in}

\author[C.~Riera]{Constanza Riera}
\address{Department of Computer Science,
 Electrical Engineering and Mathematical Sciences,   Western Norway University of Applied Sciences,  5020 Bergen, Norway}
 \email{csr@hvl.no}
 
 \author[P.~St\u anic\u a]{Pantelimon~St\u anic\u a}
 \address{Applied Mathematics Department, Naval Postgraduate School, Monterey 93943, USA}
\email{pstanica@nps.edu}

\begin{abstract}
We give some classes of power maps with low $c$-differential uniformity over finite fields of odd characteristic, {for $c=-1$}.  Moreover, we give a necessary and sufficient condition for a linearized polynomial to be a perfect $c$-nonlinear function and investigate conditions when perturbations of perfect $c$-nonlinear (or not) function via an arbitrary Boolean or $p$-ary function is perfect $c$-nonlinear. In the process, we obtain a class of polynomials that are perfect $c$-nonlinear  for all $c\neq 1$, in every characteristic. The affine, extended affine and CCZ-equivalence is also looked at, as it relates to $c$-differential uniformity.
\end{abstract}

\keywords{Boolean and $p$-ary functions, $c$-differentials, Walsh transform, differential uniformity, perfect and almost perfect $c$-nonlinearity, Dickson polynomial}

\subjclass[2010]{12E20, 06E30, 11T06, 94A60, 94C10}

\maketitle

\section{Introduction}\label{S1}
Let $p$ be an odd prime (unless stated otherwise), and let $n$ be a positive integer. We denote by $\mathbb{F}_q$ the finite field with $q=p^n$ elements and by $\mathbb{F}_{q}^*$, the multiplicative cyclic group of nonzero elements of $\mathbb{F}_q$. 
We call a function from $\F_{p^n}$ to $\F_p$  a {\em $p$-ary  function} on $n$ variables.
%

For positive integers $n$ and $m$, any map $F:\F_{p^n}\to\F_{p^m}$ is called a {\em vectorial $p$-ary  function}, or {\em $(n,m)$-function}. When $m=n$, $F$ can be uniquely represented as a univariate polynomial over $\F_{p^n}$ of the form
$
F(x)=\sum_{i=0}^{p^n-1} a_i x^i$,\ $a_i\in\F_{p^n}.
$

The Walsh transform $ \cW_{F}(a,b)$ of an $(n,m)$-function $F$ at $a\in \F_{p^n}, b\in \F_{p^m}$ is 
defined as
\[
  \cW_{F}(a,b)=\sum_{x\in\F_{p^n}} \zeta_p^{\Tr_m(bF(x))-\Tr_n(ax)},
\]
where $\zeta= e^{\frac{2\pi i}{p}}$ is a $p$-root of unity and $\Tr:\f_{p^n}\to \f_p$ is the absolute trace function, given by $\displaystyle \Tr(x)=\sum_{i=0}^{n-1} x^{p^i}$.

We also say that $\alpha\in\F_{p^n}^*$ is a {\em $\beta$-linear structure} for $F$, if $F(x+\alpha)-F(x)=\beta$, for all $x\in\F_{p^n}$.

Let $F$ be a function $F: \mathbb{F}_{p^n} \rightarrow \mathbb{F}_{p^n}$. For any $a,b \in \mathbb{F}_{p^n}$, we denote by $N(a,b)$ the number of solutions $x\in \mathbb{F}_{p^n}$ to $F(x+a)-F(x)=b$. Let 
$$
\Delta_F = \mbox{max}\{N(a,b)~|~a,b \in \mathbb{F}_{p^n}, a\neq0 \},
$$ 
then a function $F$ is called {\em differentially $\delta$-uniform} if $\Delta_F= \delta$. When $\Delta_F=1$, we call $F$ to be {\em perfect nonlinear} (PN) function. In the literature, the PN functions sometimes also referred to as {\em planar functions}. It is straightforward to see that in the case of characteristic two, $x+a$ and $x$ have the same image and hence there is no PN function over a finite field of even characteristic.

Deviating from the usual differentials $(F(x + a), F(x))$, Borisov et. al. ~\cite{Bori} introduced the notion of so called multiplicative differentials of the form $(F(cx), F(x))$ and they used this new type of differentials to attack some existing ciphers. Motivated by the multiplicative differential as discussed in \cite{Bori}, two of us, along with Ellingsen, Felke,  and Tkachenko~\cite{CDU} defined a new (output) multiplicative differential in the following way. 
\begin{defn}
\label{defCDU}
Let $F$ be a function from a finite field $\mathbb{F}_{p^n}$ to itself. For any $a,c \in \mathbb{F}_{p^n}$,  the (multiplicative) $c$-derivative of $F$ with respect to $a$ is defined as 
$$_cD_a F(x)= F(x+a)-cF(x)~\mbox{for all}~x \in \mathbb{F}_{p^n}.$$
For $a,b\in \mathbb{F}_{p^n}$, let $_c\Delta_F(a,b) = \#\{x\in \mathbb{F}_{p^n}: F(x+a)-cF(x)=b \}.$ The {\em $c$-differential uniformity} of  $F$, denoted as $_c\Delta_F$,  is then defined  as $$_c\Delta_F := \mbox{max}\{_c\Delta_F(a,b): a,b \in \mathbb{F}_{p^n},~ \mbox{and}~ a\neq0 ~ \mbox{if}~ c=1 \}.$$ When $_c\Delta_F = \delta$, we say that {\em $c$-differential uniformity} of $F$ is $\delta$.
\end{defn}

It is easy to see that when $c=1$, $c$-differential uniformity coincides with the usual notion of differential uniformity.
If $\delta=1$ then $F$ is called {\em perfect $c$-nonlinear} (PcN) function and when $\delta=2$ then $F$ is called {\em almost perfect $c$-nonlinear} (APcN) function. 

Recently, Bartoli and Timpanella~\cite{BT}   gave a generalization of planar functions as follows.
\begin{defn}\label{defBT}
Let $\beta \in \mathbb{F}_{p^n} \backslash \{0,1\}$. A function $F : \mathbb{F}_{p^n} \rightarrow \mathbb{F}_{p^n}$ is a $\beta$-planar  function in $\mathbb{F}_{p^n}$ if $\forall~ \gamma \in \mathbb{F}_{p^n},~~~~ F(x+\gamma) - \beta F(x)$ is a permutation of $\mathbb{F}_{p^n}.$
\end{defn}
In the particular case, when $\beta =-1$, then $\beta$-planar function is called quasi-planar. In view of the Definition~\ref{defCDU}, the $\beta$-planar functions are simply  PcN functions and quasi-planar functions are PcN functions with $c=-1$. In what follows, we shall adopt the notations given in Definition~\ref{defCDU}. It is easy to see from the definition of PcN function that when $c\neq1$ and $a=0$ then $F(x+a)-cF(x)=(1-c)F(x)$ is a permutation polynomial if and only if $F(x)$ is a permutation polynomial. Therefore, we shall consider the perfect $c$-nonlinearity of permutation polynomials only.

In the next section, we establish a relation between the difference function of the power map $x^d$ and Dickson polynomial of first kind over $\mathbb F_{p^n}$, for $c=-1$. In fact, such a relationship has its origin in \cite[Proposition 8]{Li}, where it was established for the fields of characteristic 3. However, it turns out that the sufficient conditions of \cite[Proposition 8]{Li} are not correct. Bartoli and Timpanella  in their recent work \cite[Theorem 6.1]{BT} extended and corrected \cite[Proposition 8]{Li}.

Now we give the structure of the paper. In Section~\ref{S2}, we show that over a finite field $\mathbb{F}_{p^n}$ of odd characteristic, the conditions of  \cite[Theorem 6.1]{BT} are also sufficient. As a consequence, we shall show that $\displaystyle x^{\frac{p^\ell+1}{2}}$ is PcN for $c=-1$ over $\mathbb{F}_{p^n}$ if and only if $\ell = 0$ or $\displaystyle \frac{\ell}{\gcd~(\ell,n)}$ is even (see also~\cite{ZM}, or~\cite{RS20}, where this function was thoroughly analyzed). In Section~\ref{S3}, we give four classes of power maps whose $c$-differential uniformity for $c=-1$ is $2, 3, 6~ \mbox{and}~7$. In Section~\ref{S4}, we give all values of $d$ for which $x^d$ is PcN over the finite fields $\mathbb{F}_{3^5}$, $\mathbb{F}_{5^5}$ and $\mathbb{F}_{7^5}$, respectively, for $c=-1$. Following the pattern of the computational results, we propose a conjecture about the plausible values of $d$ for which $x^d$ is PcN over $\mathbb{F}_{p^5}$ for $c=-1$. Similarly in Section~\ref{S5}, we give all values of $d$ for which $x^d$ is PcN over the finite fields $\mathbb{F}_{3^7}$, $\mathbb{F}_{5^7}$ and $\mathbb{F}_{7^7}$, respectively, for $c=-1$. Following the pattern of the computational results, we propose another conjecture about the plausible values of $d$ for which $x^d$ is PcN over $\mathbb{F}_{p^7}$ for $c=-1$. In Section~\ref{S6}, for $c\neq 1$, we give a necessary and sufficient condition for a linearized polynomial to be PcN. We also find necessary and sufficient conditions for the sum $F+\gamma f$ to be PcN, where $\gamma\in\F_{p^n}$, $F$ is PcN and $f$ is any Boolean function. We also show that in some instances such perturbations do not produce PcN functions. We further discuss the affine, extended affine and CCZ-equivalence as it relates to $c$-differential uniformity. Finally, in Section~\ref{S7} we present the conclusion of the paper.

\section{P$c$N power maps and Dickson polynomials}
 \label{S2}
Recall that for $c=-1$, a polynomial function $F(x)$ is called PcN over $\mathbb{F}_{p^n}$ if the corresponding mapping $x \rightarrow F(x+a)+F(x)$ is a permutation of $\mathbb{F}_{p^n}$ for each $a \in \mathbb F_{p^n}$. Therefore, a power map $x^d$ is PcN if and only if $(x+a)^d+x^d$ is a permutation of $\mathbb{F}_{p^n}$ for each $a \in \mathbb{F}_{p^n}$. Now we present some lemmas that will be useful in the sequel. Throughout this section, we shall assume that $c=-1$, whenever we refer to PcN functions.

\begin{lem}
\label{equiv-def}
A monomial $x^d$ is perfect $(-1)$-nonlinear in $\mathbb{F}_{p^n}$ if and only if  $x^d$ and $(x+1)^d+(x-1)^d$ are permutations of $\mathbb{F}_{p^n}$. 
\end{lem}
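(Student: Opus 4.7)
The plan is to reduce the defining condition of PcN for $c=-1$ (which asks that $x\mapsto (x+a)^d+x^d$ be a permutation for every $a\in\mathbb{F}_{p^n}$) to just two cases: the degenerate case $a=0$, and a single representative $a\neq 0$, which by scaling will be equivalent to the symmetric condition in the statement.

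First I would separate the case $a=0$. When $a=0$, the $c$-derivative is $x\mapsto 2x^d$; since $p$ is odd, $2\in\mathbb{F}_{p^n}^*$, and so this map is a permutation of $\mathbb{F}_{p^n}$ if and only if $x^d$ is. This handles the first half of the right-hand side condition in the lemma.

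Next, for $a\neq 0$ I would perform the affine change of variable $x=\tfrac{a}{2}(y-1)$, which is a bijection $\mathbb{F}_{p^n}\to\mathbb{F}_{p^n}$ since $a/2\in\mathbb{F}_{p^n}^*$. Then $x+a=\tfrac{a}{2}(y+1)$, and consequently
\begin{equation*}
(x+a)^d + x^d \;=\; \left(\tfrac{a}{2}\right)^{d}\Bigl[(y+1)^d+(y-1)^d\Bigr].
\end{equation*}
Since $(a/2)^d$ is a nonzero constant, the map $x\mapsto (x+a)^d+x^d$ is a permutation of $\mathbb{F}_{p^n}$ if and only if $y\mapsto (y+1)^d+(y-1)^d$ is. Crucially, this latter map does not depend on $a$, so the condition "for every $a\neq 0$, $x\mapsto (x+a)^d+x^d$ permutes $\mathbb{F}_{p^n}$" collapses to the single condition that $(y+1)^d+(y-1)^d$ is a permutation.

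Putting the two cases together yields the stated equivalence. I do not expect any real obstacle here: the only non-obvious ingredient is choosing the symmetric substitution $x=\tfrac{a}{2}(y-1)$ rather than the more common $x=ay$, which would produce $(y+1)^d+y^d$ instead. The symmetric choice is what makes the test polynomial independent of $a$ and matches the form $(x+1)^d+(x-1)^d$ appearing in the statement.
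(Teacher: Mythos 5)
Your proof is correct and follows essentially the same route as the paper: the $a=0$ case is handled identically, and your single substitution $x=\tfrac{a}{2}(y-1)$ is exactly the composition of the paper's two successive changes of variable ($y=x/a$ followed by $z=2y+1$), yielding the same factorization $\left(\tfrac{a}{2}\right)^d\bigl[(y+1)^d+(y-1)^d\bigr]$ with a nonzero constant pulled out. No gap; your version is just a slightly more streamlined presentation of the same argument.
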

\begin{proof}
Let $F(x)=x^d$; then, by definition, $F$ is a PcN function if and only if $(x+a)^d+x^d$ is a permutation of $\mathbb{F}_{p^n}$ for all $a\in\mathbb{F}_{p^n}$. For $a = 0$, we have $(x+a)^d+x^d=2x^d$, and $2x^d$  is clearly a permutation of $\mathbb{F}_{p^n}$ if and only if $x^d$ is a permutation of $\mathbb{F}_{p^n}$.
For $a \neq 0$, we have
\allowdisplaybreaks
\begin{align*}
&(x+a)^d+x^d ~~~\mbox{is a permutation of $\mathbb{F}_{p^n}$} \\
\Longleftrightarrow \quad  &a^d \left[\left(\frac{x}{a}+1\right)^d + \left(\frac{x}{a}\right)^d\right] \mbox{ is a permutation of $\mathbb{F}_{p^n}$} \\
\Longleftrightarrow \quad& \left(\frac{x}{a}+1\right)^d + \left(\frac{x}{a}\right)^d \mbox{ is a permutation of $\mathbb{F}_{p^n}$} \\
\Longleftrightarrow \quad & \left(y+1\right)^d + y^d \mbox{ is a permutation of $\mathbb{F}_{p^n}$};~ \mbox{where}~ a y=x \\
\Longleftrightarrow \quad & \left(\frac{2y+1+1}{2}\right)^d + \left(\frac{2y+1-1}{2}\right)^d \mbox{ is a permutation of $\mathbb{F}_{p^n}$} \\
\stackrel{z:=2y+1}{\Longleftrightarrow} \quad & \left(\frac{1}{2}\right)^d\left[\left(z+1\right)^d + \left(z-1\right)^d\right] \mbox{ is a permutation of $\mathbb{F}_{p^n}$} \\
 \Longleftrightarrow \quad & \left(z+1\right)^d + \left(z-1\right)^d \mbox{ is a permutation of $\mathbb{F}_{p^n}$}.
 \end{align*}
This completes the proof of the lemma.
\end{proof}
In what follows, we shall adopt this definition of PcN function for power maps, when $c=-1$.  One of the motivations behind considering this definition is that we can establish a connection between $(x+1)^d+(x-1)^d$ and $d$-th Dickson polynomial of the first kind. We recall the Dickson's original approach of defining the Dickson polynomial $D_d(x,a)$, which was essentially based on the relationship between the sum of $d$-th powers and elementary symmetric functions. In fact, the $d$-th Dickson polynomial of the first kind $D_d(x,a) \in \mathbb F_q[x]$ ($q$ a power of the prime $p$) admits the following representation
\begin{equation}
\begin{split}
\label{dickson}
 u_1^d+u_2^d&= \sum_{i=0}^{\lfloor \frac{d}{2} \rfloor} \frac{d}{d-i} \binom{d-i}{ i} (-u_1u_2)^i (u_1+u_2)^{d-2i} \\
 &= D_d(u_1+u_2, u_1u_2),
\end{split}
\end{equation}
where $u_1,u_2$ are indeterminates and $\displaystyle D_d(x,a)= \sum_{i=0}^{\lfloor \frac{d}{2} \rfloor} \frac{d}{d-i}  \binom{d-i}{ i}(-a)^i x^{d-2i}$.

We will be using in some places Hilbert's Theorem 90 (see~\cite{Bo90}), which states that if $\mathbb{F}\hookrightarrow \mathbb{K}$  is a cyclic Galois extension and $\sigma$ is a generator of the Galois group ${\rm Gal}(\mathbb{K}/\mathbb{F})$, then  the relative trace $\displaystyle \Tr_{\mathbb{K}/\mathbb{F}}(x)=\sum_{i=0}^{|{\rm Gal}(\mathbb{K}/\mathbb{F})|-1}\sigma^i(x)=0$, $x\in \mathbb{K}$, if and only if $x=\sigma(y)-y$, for some $y\in\mathbb{K}$.

We now recall a result of N{\"o}bauer \cite{WN}, which we shall often use, regarding the permutation behavior of  Dickson polynomial of the first kind over the finite field $\mathbb{F}_{p^n}$. 
\begin{lem}\cite{WN}
\label{DPP}
Let $a\in\mathbb{F}_{p^n}^*$. The $d$-th Dickson polynomial of the first kind $D_d(x,a)$ permutes the elements of finite field $\mathbb{F}_{p^n}$ if and only if $\displaystyle \gcd~(d, p^{2n}-1)=1.$ 
\end{lem}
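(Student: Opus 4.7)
The plan is to exploit the substitution $x=y+a/y$ that links the Dickson polynomial to a sum of powers. Specifically, from identity \eqref{dickson} with $u_1=y$, $u_2=a/y$, one has
$$
D_d\!\left(y+\tfrac{a}{y},\,a\right) \;=\; y^d + \left(\tfrac{a}{y}\right)^d,
$$
so the study of $D_d$ on $\mathbb F_{p^n}$ reduces to the study of the $d$-th power map on a suitable subset of $\mathbb F_{p^{2n}}^*$. I would set
$$
V \;=\; \mathbb F_{p^n}^*\,\cup\, U_a,\qquad U_a=\{y\in\mathbb F_{p^{2n}}^*:\, y^{p^n+1}=a\},
$$
and define $\pi:V\to\mathbb F_{p^n}$ by $\pi(y)=y+a/y$. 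The first step is to show $\pi$ is surjective onto $\mathbb F_{p^n}$: for any $x\in\mathbb F_{p^n}$ the quadratic $y^2-xy+a=0$ has roots $y_1,y_2$ with $y_1y_2=a$, and either both lie in $\mathbb F_{p^n}^*$ or they are $\mathbb F_{p^n}$-conjugate in $\mathbb F_{p^{2n}}$, in which case $y_1^{p^n+1}=y_1 y_2=a$, so $y_1\in U_a$. Moreover, the fibers of $\pi$ are exactly the orbits $\{y,a/y\}$.

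Next, observe that powering commutes with this construction: if $y\in \mathbb F_{p^n}^*$ then $y^d\in\mathbb F_{p^n}^*$, and if $y\in U_a$ then $(y^d)^{p^n+1}=a^d$, so $y^d\in U_{a^d}$. Consequently
$$
D_d\bigl(\pi(y),a\bigr) \;=\; \pi'\bigl(y^d\bigr),
$$
where $\pi'$ is the analogous map built from $a^d$ on $V'=\mathbb F_{p^n}^*\cup U_{a^d}$. Thus $D_d$ is a permutation of $\mathbb F_{p^n}$ if and only if the map $y\mapsto y^d$ sends distinct $\pi$-fibers in $V$ to distinct $\pi'$-fibers in $V'$.

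For the sufficient direction, suppose $\gcd(d,p^{2n}-1)=1$. Since $V\subset\mathbb F_{p^{2n}}^*$ and $|\mu_d(\mathbb F_{p^{2n}}^*)|=\gcd(d,p^{2n}-1)=1$, the map $y\mapsto y^d$ is injective on all of $\mathbb F_{p^{2n}}^*$. If $D_d(\pi(y_1),a)=D_d(\pi(y_2),a)$, then $\{y_1^d,a^d/y_1^d\}=\{y_2^d,a^d/y_2^d\}$, and injectivity of powering forces $\{y_1,a/y_1\}=\{y_2,a/y_2\}$, i.e., $\pi(y_1)=\pi(y_2)$. Hence $D_d$ is injective on $\mathbb F_{p^n}$, thus a permutation. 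For the necessary direction, suppose $m:=\gcd(d,p^{2n}-1)>1$. Since $p^{2n}-1=(p^n-1)(p^n+1)$, either $m$ shares a nontrivial factor with $p^n-1$ or with $p^n+1$; in the first case choose a nontrivial $d$-th root of unity $\zeta\in\mathbb F_{p^n}^*$, and in the second choose $\zeta\in\ker(N_{\mathbb F_{p^{2n}}/\mathbb F_{p^n}})$ with $\zeta^d=1$, $\zeta\neq 1$. In either case $y$ and $\zeta y$ live in the same component of $V$ (so $\pi(\zeta y)\in\mathbb F_{p^n}$) and satisfy $(\zeta y)^d+a^d/(\zeta y)^d = y^d+a^d/y^d$, producing a collision $D_d(\pi(y),a)=D_d(\pi(\zeta y),a)$.

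The main obstacle is the last step: one must verify that the collision is genuine, i.e., that the generic choice of $y$ satisfies $\pi(\zeta y)\neq \pi(y)$. This reduces to excluding the relation $\zeta y = a/y$, equivalently $y^2=a/\zeta$, which rules out at most two values of $y$ in each component of $V$; since each component has size $p^n\pm 1 > 2$, such $y$ exist, and the collision is non-trivial. Combining both implications gives the stated equivalence.
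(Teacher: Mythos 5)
The paper does not prove this lemma at all -- it simply cites N\"obauer -- so there is nothing internal to compare against; what you have written is essentially the classical proof of N\"obauer's theorem (the one found in Lidl--Mullen--Turnwald), based on the functional equation $D_d(y+a/y,a)=y^d+(a/y)^d$ from Equation~\eqref{dickson}, the surjectivity of $y\mapsto y+a/y$ from $V=\mathbb{F}_{p^n}^*\cup U_a$ onto $\mathbb{F}_{p^n}$, and the fact that the fibers are the pairs $\{y,a/y\}$. Your sufficiency direction is clean and correct: $\gcd(d,p^{2n}-1)=1$ makes $y\mapsto y^d$ injective on $\mathbb{F}_{p^{2n}}^*$, and the fiber description of $\pi'$ transfers that injectivity to $D_d$ on $\mathbb{F}_{p^n}$. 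Two small points deserve attention. First, you use without comment that $\pi(U_a)\subseteq\mathbb{F}_{p^n}$; this needs the one-line Frobenius check that for $y^{p^n+1}=a$ one has $(y+a/y)^{p^n}=y^{p^n}+a/y^{p^n}=a/y+y$, so state it. Second, in the necessity direction your final counting claim ``each component has size $p^n\pm1>2$'' fails for the component $\mathbb{F}_{p^n}^*$ when $p^n=3$: there the two solutions of $y^2=a/\zeta$ can exhaust the whole component (e.g.\ $a=2$, $\zeta=-1$ in $\mathbb{F}_3$), and your case-$1$ construction produces no collision. This is only a boundary blemish, not a flaw in the method: when $p^n=3$ every prime divisor of $\gcd(d,p^{2n}-1)$ is $2$, which also divides $p^n+1=4$, so you may always run your case-$2$ argument in the norm-one component $U_a$, of size $p^n+1\geq 4>2$ (note also that for $\zeta=-1$ one has $\zeta U_a=U_a$, so the same repair works if you prefer to keep $\zeta\in\mathbb{F}_{p^n}^*$). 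With these two touches the proof is complete and matches the cited classical result.
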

The following lemma  will be used throughout.
\begin{lem}\cite[Lemma 9]{CDU} 
\label{gcd}
 Let $p$ be a prime number and $\ell,n$ be positive integers such that  $\ell \leq n$. Then:
 \begin{enumerate}
  \item[$(1)$] If $p$ is odd, then $\gcd(p^\ell+1, p^n-1)=2$ if $\displaystyle \frac{n}{\gcd(\ell,n)}$ is odd.
  \item[$(2)$] If $p$ is odd, then  $\gcd(p^\ell+1, p^n-1)=\displaystyle p^{\gcd(\ell,n)}+1$ if $\displaystyle \frac{n}{\gcd(\ell,n)}$ is even.
   \item[$(3)$] If $p=2$, then  $\gcd(2^\ell+1,2^n-1)=\displaystyle \frac{2^{\gcd(n,2\ell)}-1}{2^{\gcd(n,\ell)}-1}$.
 \end{enumerate}
\end{lem}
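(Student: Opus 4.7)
The plan is to reduce everything to the classical identity $\gcd(p^a-1,p^b-1)=p^{\gcd(a,b)}-1$, applied via the trick $(p^\ell+1)(p^\ell-1)=p^{2\ell}-1$. Let $d=\gcd(\ell,n)$, write $\ell=d\ell'$, $n=dn'$ with $\gcd(\ell',n')=1$, and set $g:=\gcd(p^\ell+1,p^n-1)$. Since $g$ divides both $p^{2\ell}-1$ and $p^n-1$, we have $g\mid p^{\gcd(2\ell,n)}-1$. A short computation using $\gcd(2\ell',n')=\gcd(2,n')$ shows that $\gcd(2\ell,n)=d$ when $n'$ is odd and $\gcd(2\ell,n)=2d$ when $n'$ is even; this dichotomy drives the three cases.

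For odd $p$ with $n/d$ odd, $g\mid p^d-1$. Since $p^d-1\mid p^\ell-1$ and $\gcd(p^\ell+1,p^\ell-1)=\gcd(p^\ell+1,2)=2$ (as $p$ is odd), we get $g\mid 2$; equality holds because both $p^\ell+1$ and $p^n-1$ are even, giving (1). For odd $p$ with $n/d$ even, $\ell'$ must be odd (since $\gcd(\ell',n')=1$ and $n'$ is even). The identity $a^m+1=(a+1)(a^{m-1}-a^{m-2}+\cdots+1)$ for odd $m$ then yields $p^d+1\mid p^\ell+1$, and clearly $p^d+1\mid p^{2d}-1\mid p^n-1$, so $p^d+1\mid g$. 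For the reverse inclusion, $g\mid(p^d-1)(p^d+1)$; writing $g=(p^d+1)h$, I would reduce the quotient $(p^\ell+1)/(p^d+1)=\sum_{i=0}^{\ell'-1}(-1)^i p^{di}$ modulo $p^d-1$, where each $p^{di}\equiv 1$, so the expression collapses to the alternating sum $\sum_{i=0}^{\ell'-1}(-1)^i=1$ (since $\ell'$ is odd). Hence $\gcd((p^\ell+1)/(p^d+1),p^d-1)=1$, forcing $h=1$ and $g=p^d+1$, which is (2).

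For $p=2$ the same template applies with one twist: $\gcd(2^\ell+1,2^\ell-1)=\gcd(2^\ell+1,2)=1$ rather than $2$, because $2^\ell+1$ is odd. Thus when $n/d$ is odd one obtains $g=1=(2^d-1)/(2^d-1)$, and when $n/d$ is even the same divisibility-and-reduction argument as in case (2) yields $g=2^d+1=(2^{2d}-1)/(2^d-1)$. Both subcases merge into the single uniform expression $(2^{\gcd(n,2\ell)}-1)/(2^{\gcd(n,\ell)}-1)$ advertised in (3).

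I expect the main obstacle to be the upper bound in case (2): ruling out that $g$ picks up any additional factor from $p^d-1$ beyond the already-established $p^d+1$. The divisibility $g\mid(p^d-1)(p^d+1)$ alone is too weak, and the crucial ingredient is the alternating-sum reduction of $(p^\ell+1)/(p^d+1)$ modulo $p^d-1$, which uses the parity of $\ell'$ in an essential way. Every other step is either a routine divisibility chase or a direct application of $\gcd(p^a-1,p^b-1)=p^{\gcd(a,b)}-1$.
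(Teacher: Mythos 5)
Your proof is correct: the reduction $g \mid p^{\gcd(2\ell,n)}-1$ via $p^\ell+1 \mid p^{2\ell}-1$, the dichotomy $\gcd(2\ell,n)=d$ or $2d$ according to the parity of $n/d$, and in particular the delicate upper bound in case (2) — writing $g=(p^d+1)h$ with $h\mid p^d-1$ and $h\mid (p^\ell+1)/(p^d+1)\equiv 1 \pmod{p^d-1}$ by the alternating-sum reduction with $\ell'$ odd — all check out, and the $p=2$ case is handled correctly by the same template. Note that the paper itself gives no proof of this statement; it is quoted verbatim from \cite[Lemma 9]{CDU}, so there is nothing internal to compare against, but your argument is the standard one based on $\gcd(p^a-1,p^b-1)=p^{\gcd(a,b)}-1$ and is complete.
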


The following lemma gives a nice connection between the difference function of the power map $x^d$ and the Dickson polynomial for first kind over $\mathbb F_{3^n}$, for $c=-1$.
\begin{lem}
\cite[Proposition 8]{Li} 
\label{Intro-Li}
For a positive odd integer $n$ with $n\geq3$, if $d\equiv -1 \pmod 3$ and $\mbox{gcd}~(d, 3^{2n}-1) = 1$, then
\begin{equation}\label{rel}
 (x+1)^d+(x-1)^d = 2D_d(x,1)
\end{equation}
is a permutation of $\F_{3^n}$, where $D_d(x, 1)$ is the Dickson polynomial of the first kind.
\end{lem}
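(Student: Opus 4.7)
The plan is to combine the Dickson representation identity \eqref{dickson} with Lemma~\ref{DPP}. Taking $u_1 = x+1$ and $u_2 = x-1$ in \eqref{dickson}, so that $u_1 + u_2 = 2x$ and $u_1 u_2 = x^2 - 1$, gives the starting point
\[(x+1)^d + (x-1)^d = D_d(2x,\, x^2 - 1).\]
The argument then splits naturally into two parts: (i) reduce the right-hand side to $2\, D_d(x, 1)$ as a polynomial identity in $\F_3[x]$, using the hypothesis $d \equiv -1 \pmod 3$; and (ii) invoke Lemma~\ref{DPP} to conclude the permutation property.

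For part (i), I would pass to an algebraic closure of $\F_3$ under the parameterization $x = y + y^{-1}$, which makes $D_d(x, 1) = y^d + y^{-d}$ tautological. The characteristic-$3$ factorizations $y^2 + y + 1 = (y-1)^2$ and $y^2 - y + 1 = (y+1)^2$ give $x+1 = (y-1)^2/y$ and $x-1 = (y+1)^2/y$, so clearing $y^d$ reduces the target to
\[(y-1)^{2d} + (y+1)^{2d} = 2\bigl(y^{2d} + 1\bigr).\]
Writing $2d = 3k+1$ (possible because $d \equiv -1 \pmod 3$) and applying the Frobenius relations $(y\pm 1)^3 = y^3 \pm 1$, the left-hand side becomes $(y^3+1)^k(y+1) + (y^3-1)^k(y-1)$. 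Setting $u = y^3$ and separating the constant-in-$y$ and linear-in-$y$ parts reduces everything to verifying the two polynomial identities
\[(u+1)^k + (u-1)^k = 2 u^k \quad\text{and}\quad (u+1)^k - (u-1)^k = 2\]
in $\F_3[u]$.

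The main obstacle is this last step. These auxiliary identities hold when $k$ is a power of $3$ (again by Frobenius), but they fail for general $k$, which is precisely the gap in \cite[Proposition~8]{Li} flagged in the paragraph preceding the lemma. Granting the polynomial identity, the proof concludes immediately: the hypothesis $\gcd(d, 3^{2n} - 1) = 1$ together with Lemma~\ref{DPP} implies that $D_d(x, 1)$ permutes $\F_{3^n}$, and multiplication by the nonzero scalar $2 \in \F_3^*$ preserves permutations. The corrected sufficient conditions of \cite[Theorem~6.1]{BT}, whose sufficiency is the main goal of Section~\ref{S2}, are tailored exactly to make this identity step valid.
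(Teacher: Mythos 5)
You have nothing to repair here, because the paper itself offers no proof of this statement to compare against: Lemma~\ref{Intro-Li} is quoted from \cite[Proposition 8]{Li}, and the paragraph immediately following it points out that its sufficient conditions are in fact wrong, citing the counterexample $n=5$, $d=17$ (which satisfies $d\equiv -1\pmod 3$ and $\gcd(d,3^{10}-1)=1$, yet $(x+1)^{17}+(x-1)^{17}\neq 2D_{17}(x,1)$). Your diagnosis of where any proof attempt must break is exactly right, and your reduction is sound: with $x=y+y^{-1}$ one has, in characteristic $3$, $x+1=(y-1)^2/y$ and $x-1=(y+1)^2/y$, and writing $2d=3k+1$ the claimed identity becomes $(u+1)^k+(u-1)^k=2u^k$ and $(u+1)^k-(u-1)^k=2$ in $\F_3[u]$; adding and subtracting, these amount to $(u+1)^k=u^k+1$ and $(u-1)^k=u^k-1$, which by Lucas' theorem hold jointly precisely when $k$ is a power of $3$, i.e.\ $2d=3^{t+1}+1$. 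That is exactly the corrected characterization of Theorem~\ref{maint} (equivalently \cite[Theorem 6.1]{BT}) specialized to $p=3$; for the paper's counterexample $d=17$ one gets $k=11$, not a power of $3$. Note, moreover, that not only the identity but also the permutation conclusion of the lemma fails: $\gcd(17,3^5-1)=1$, so $x^{17}$ permutes $\F_{3^5}$, yet the exhaustive search reported in Section~\ref{S4} shows $x^{17}$ is not PcN there, so by Lemma~\ref{equiv-def} the map $x\mapsto(x+1)^{17}+(x-1)^{17}$ is not a permutation of $\F_{3^5}$.

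As a side remark, your route has independent value: it gives a short derivation, for $p=3$, of both directions of Theorem~\ref{maint}, whereas the paper establishes necessity by comparing coefficients (following \cite{BT}, supplemented by the $k=0$ case) and sufficiency via the substitution $u_1=u/2$, $u_2=u^{-1}/2$ in the Dickson identity~\eqref{dickson}. The correct statement your argument actually proves is: for $p=3$ and $d=(3^{\ell}+1)/2$ one has $(x+1)^d+(x-1)^d=2D_d(x,1)$, and this is a permutation of $\F_{3^n}$ if and only if $\gcd(d,3^{2n}-1)=1$ by Lemma~\ref{DPP} --- which is precisely the form in which the paper uses this circle of ideas downstream.
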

As alluded to in Introduction, the sufficient conditions in the above lemma do not hold, and the counterexamples can be found using easy computer searches. For instance, when $n=5$ and $d=17$, $d$ clearly satisfies the conditions of Lemma~\ref{Intro-Li}, but $(x+1)^{17}+(x-1)^{17} \neq 2D_d(x,1)$. Bartoli and Timpanella~\cite[Theorem 6.1]{BT} provided the correct conditions on $d$ for which (\ref{rel}) holds over finite fields of odd characteristic. However, it appears that there is a missing case $(k=0)$ in~\cite[Theorem 6.1]{BT}, which we shall include here. The following theorem provides a relationship between the difference function of the power map $x^d$ and the Dickson polynomial of first kind over $\mathbb F_{p^n}$, for $c =-1$. 

\begin{thm}
\label{maint}
Let $p$ be an odd prime, $d$ be a positive integer such that $d=a_0+a_1p+a_2p^2+ \cdots +a_kp^k$ for some $k\geq 0$, where $a_i \in \{0, 1, \cdots, p-1 \}$ and $a_0, a_k \neq 0$, then  $\displaystyle (x+1)^d+(x-1)^d=2D_d(x,\epsilon)$ for some $\epsilon \in \mathbb{F}_p^*$ if and only if either
\begin{enumerate}
 \item[$(1)$] $d= 1,2,3;$ or
 \item[$(2)$] $\displaystyle a_0 =\frac{p+1}{2}$ and $\displaystyle a_j = \frac{p-1}{2}$  $\forall j \in \{1, 2, \ldots ,k\}$ $\left(\text{thus}, \displaystyle d=\frac{p^{k+1}+1}{2}\right)$.
\end{enumerate}
\end{thm}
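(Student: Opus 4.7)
My plan is to expand both sides of the putative identity and compare coefficients modulo $p$. Since only even binomial terms survive,
\[
(x+1)^d+(x-1)^d \,=\, 2\sum_{j=0}^{\lfloor d/2\rfloor}\binom{d}{2j}\,x^{d-2j},
\]
while the definition of the Dickson polynomial gives
\[
2D_d(x,\epsilon) \,=\, 2\sum_{j=0}^{\lfloor d/2\rfloor}\frac{d}{d-j}\binom{d-j}{j}(-\epsilon)^j\,x^{d-2j}.
\]
Thus the claimed identity is equivalent to the coefficient-by-coefficient system
\[
\binom{d}{2j} \,\equiv\, \frac{d}{d-j}\binom{d-j}{j}(-\epsilon)^j \pmod p, \qquad j=0,1,\ldots,\lfloor d/2\rfloor.
\]

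For sufficiency, case (1) follows by direct computation: $d=1,2,3$ give the identity with $\epsilon = 1,\ (p-1)/2,\ -1$ respectively (for $d=1$ any nonzero $\epsilon$ works). For case (2), where $d=(p^{k+1}+1)/2$, I would set $\epsilon=-(d-1)/2$, which reduces to $1/4\in\mathbb{F}_p^*$ since $p^{k+1}\equiv 0\pmod p$, and verify the system by applying Lucas' theorem to both sides: the base-$p$ digits $a_0=(p+1)/2$ and $a_1=\cdots=a_k=(p-1)/2$ give a clean description of $\binom{d}{2j}\pmod p$ in terms of the digits of $2j$, and the Dickson factor $\frac{d}{d-j}\binom{d-j}{j}(-\epsilon)^j$ admits a parallel Lucas-type description via the digits of $d$, $d-j$, and $j$, matching the LHS term by term.

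For necessity, assume the system holds with $\epsilon\in\mathbb{F}_p^*$ and $d\ge 4$. The $j=1$ equation immediately forces $\epsilon\equiv -(d-1)/2\pmod p$, whence $d\not\equiv 1\pmod p$ since $\epsilon\ne 0$. Substituting this $\epsilon$ into the $j=2$ equation and simplifying yields $2d\equiv 1\pmod p$, i.e.\ $a_0=(p+1)/2$. I would then force $a_r=(p-1)/2$ for $r=1,\ldots,k$ by induction on $r$: for each $r$, I plan to pick a test index $j_r$ whose base-$p$ expansion is a carefully chosen perturbation of a prefix of $d$'s expansion, so that Lucas' theorem isolates the digit $a_r$ on the left, while the right-hand side matches only when $a_r=(p-1)/2$. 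Case (1) of the statement then emerges as the boundary $d\le 3$, where $\lfloor d/2\rfloor\le 1$ and the $j=2$ obstruction is vacuous, so the $j=0,1$ relations suffice.

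The main obstacle is this digit-by-digit analysis: one has to engineer the indices $j_r$ so that Lucas' theorem produces the precise (non)vanishing pattern on both sides, while tracking the rational prefactor $\frac{d}{d-j}\binom{d-j}{j}$, which can simultaneously acquire $p$-adic zeros and poles that must cancel correctly. Care is also required at degenerate configurations such as $d-j\equiv 0\pmod p$, and one must rule out alternative leading digits — in particular the possibility $a_0\equiv 3\pmod p$, where the $j=2$ relation collapses to $0\equiv 0$ — by invoking instead the $j=3$ relation to obtain a usable constraint.
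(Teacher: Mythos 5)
Your overall strategy---expand both sides and compare coefficients modulo $p$---is a legitimate starting point, and your $j=0,1,2$ analysis in the necessity direction (pinning down $\epsilon\equiv(1-d)/2$ and then $2d\equiv 1\pmod p$, hence $a_0=\frac{p+1}{2}$) reproduces essentially what the paper does for its $k=0$ case. But the two places where the real content of the theorem lies are left as plans rather than proofs. First, the sufficiency for $d=\frac{p^{k+1}+1}{2}$: you assert that the Dickson coefficients $\frac{d}{d-j}\binom{d-j}{j}(-\epsilon)^j$ with $\epsilon=\frac14$ admit a ``parallel Lucas-type description'' matching $\binom{d}{2j}$ term by term, but that matching is precisely the identity to be established and it is not routine: $\frac{d}{d-j}\binom{d-j}{j}$ is not a single binomial coefficient, and when $p\mid(d-j)$ one cannot simply reduce the rational expression, so the promised computation would have to be carried out in full and is delicate. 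The paper avoids coefficient comparison here entirely: it uses the defining functional equation of Dickson polynomials, writing $x=\frac{u+u^{-1}}{2}$ with $u,u^{-1}$ roots of $z^2-2xz+1$, and computes $(x+1)^{\frac{p^{\ell}+1}{2}}+(x-1)^{\frac{p^{\ell}+1}{2}}$ via $(u\pm1)^{p^{\ell}+1}$ and the Frobenius, obtaining $2D_{\frac{p^{\ell}+1}{2}}\left(x,\frac14\right)$ in a few lines; you would need either such an argument or an actual execution of your Lucas verification.

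Second, in the necessity direction, forcing $a_r=\frac{p-1}{2}$ for $r=1,\ldots,k$ is the hard part of the theorem, and your inductive ``test index $j_r$'' scheme is only described, not executed---you yourself flag the tracking of $p$-adic zeros and poles of $\frac{d}{d-j}\binom{d-j}{j}$ as the main obstacle, and the degenerate case $a_0\equiv 3$ (where you defer to the $j=3$ relation without checking it; note, e.g., that $p=5$ then needs genuine Lucas care) is likewise unresolved. For comparison, the paper does not reprove this portion: it invokes Bartoli--Timpanella (Theorem 6.1 of \cite{BT}) for the necessity when $k\geq 1$ and supplies only the missing $k=0$ case, which is exactly the $i=1,2$ coefficient comparison you give. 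As it stands, your proposal therefore has genuine gaps in both directions of the equivalence, and the parts you do complete are the easier ones.
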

\begin{proof}
The necessity of the theorem has already been proved in~\cite{BT} for all $k$ except for the case $k=0$. Here we shall prove the necessity for the case $k=0$. In this case, we have $d=a_0 \in \{1, \ldots, p-1 \}$. We now consider two cases, namely, $p=3$ and $p>3$. If $p=3$, the only possible values for $d$ are 1 and 2 and we are done. If $p>3$ (hence, we can assume  $d \geq 4$, since the values $d=1,2,3$ were already covered in Condition~$(1)$), we shall show that the only possible value of $a_0$ is $\displaystyle \frac{p+1}{2}$. It is given that 
$$\displaystyle (x+1)^d+(x-1)^d=2D_d(x,\epsilon)$$ 
for some $\epsilon \in \mathbb{F}_p^*$. By using binomial expansion on the left in the above equation, and by comparing the coefficients on both sides, we have

$$ 
\binom{d}{ 2i} \equiv \frac{d}{d-i}  \binom{d-i}{ i}(-\epsilon)^i  \pmod p,
$$
for all $i\in \left\{0, 1, \ldots, \lfloor \frac{d}{2} \rfloor \right\}$. 

Surely, for $i=0$, the previous claim is obviously true. For $i = 1$, we have
$$ 
 \frac{a_0(a_0-1)}{2} \equiv -\epsilon \cdot a_0   \pmod p,
$$
which is true if and only if $\displaystyle \epsilon \equiv \frac{1-a_0}{2}   \pmod p.$ 

For $i = 2$, we have 
\begin{equation} \label{eqi2}
 \frac{a_0(a_0-1)(a_0-2)(a_0-3)}{24} \equiv \frac{a_0(a_0-1)^2(a_0-3)}{8}  \pmod p.
\end{equation}
Now since $a_0 \in \{4,\ldots, p-1\}$, the congruence \eqref{eqi2} reduces to $2a_0\equiv 1 \pmod p$ which is true if and only if $\displaystyle a_0 = \frac{p+1}{2}$. Therefore for $k=0$ and $d\geq4$, $\displaystyle \frac{p+1}{2}$ is the only possible value for $a_0$. Hence, the necessity of the theorem for the case $k=0$ is established.

%
%

Next, we shall proceed to prove the sufficiency of the theorem.
When $d=1$, then $\displaystyle (x+1)^d+(x-1)^d=2x=2D_d(x,\epsilon)$ for any $\epsilon \in \mathbb{F}_p^*.$ When $d=2$, then $(x+1)^d+(x-1)^d=2(x^2+1) = \displaystyle 2D_d\left(x, -\frac{1}{2}\right)$. When $d=3$, then $(x+1)^d+(x-1)^d = 2(x^3+3x) = 2D_d(x,-1)$. For $d\geq4$, we shall show that  
 $$(x+1)^d+(x-1)^d=2D_d\left(x,\frac{1}{4}\right).$$
 Since we evaluate Dickson's polynomial over some extension of the involved prime field, $\F_p$, we assume that the variables take values in the extension 
 $\F_q$ of $\F_p$ ($q=p^n$, for some $n$). 
 Now, for $\alpha\in \F_q$, we let $\displaystyle u_1= \frac{u}{2}\in\mathbb{F}_{q^2}$ and $\displaystyle u_2 = \frac{u^{-1}}{2} \in \mathbb{F}_{q^2}$, where $u,u^{-1}$ are the roots of the polynomial $z^2-2\alpha z+1 \in \mathbb{F}_q[z]$. Then, the sum of the roots is $\displaystyle 2\alpha=u+u^{-1} \in \mathbb{F}_q$, and Equation~\eqref{dickson} reduces to
\begin{equation*}
\begin{split}
 D_d\left(\frac{u+u^{-1}}{2},\frac{1}{4}\right) &= \left(\frac{u}{2}\right)^d+ \left(\frac{u^{-1}}{2}\right)^d\\
D_d\left(\alpha ,\frac{1}{4}\right) &= \left(\frac{u}{2}\right)^d+ \left(\frac{u^{-1}}{2}\right)^d.
\end{split} 
\end{equation*}
One may note that when $d=a_0+a_1p+a_2p^2+ \cdots +a_kp^k$ for some $k\geq 0$ and $a_0 =\frac{p+1}{2}$ and $a_j = \frac{p-1}{2}$, for all $j \in \{1, 2, \ldots ,k\}$, then 
\[
d=\frac{p+1}{2}+\frac{p-1}{2}\sum_{j=1}^k p^j= \frac{p+1}{2}+\frac{p-1}{2}\, p\,\frac{p^k-1}{p-1}=\frac{p^{k+1}+1}{2}.
\]
Now, we have (with $\ell=k+1$)
\allowdisplaybreaks
\begin{align*}
(\alpha+1)^{\frac{p^{\ell}+1}{2}}+ (\alpha-1)^{\frac{p^{\ell}+1}{2}}&= \left(\frac{u+u^{-1}}{2}+1\right)^{\frac{p^{\ell}+1}{2}}+ \left(\frac{u+u^{-1}}{2}-1 \right)^{\frac{p^{\ell}+1}{2}} \\
  &=\left(\frac{1}{2}\right)^{\frac{p^{\ell}+1}{2}}\left((u+u^{-1}+2)^{\frac{p^{\ell}+1}{2}}+ (u+u^{-1}-2)^{\frac{p^{\ell}+1}{2}}\right)\\
  &=\left(\frac{1}{2u}\right)^{\frac{p^{\ell}+1}{2}}\left((u^2+2u+1)^{\frac{p^{\ell}+1}{2}}+ (u^2-2u+1)^{\frac{p^{\ell}+1}{2}}\right)\\
  &=\left(\frac{1}{2u}\right)^{\frac{p^{\ell}+1}{2}}\left((u+1)^{p^{\ell}+1}+ (u-1)^{p^{\ell}+1}\right) \\
  &=\left(\frac{1}{2u}\right)^{\frac{p^{\ell}+1}{2}}\left(2u^{p^{\ell}+1}+2 \right) \\ 
  &=2 \left(\frac{1}{2}\right)^{\frac{p^{\ell}+1}{2}}
  \left(u^\frac{{p^{\ell}+1}}{2}+(u^{-1})^\frac{{p^{\ell}+1}}{2}  \right) \\ 
  &=2\left(\left(\frac{u}{2}\right)^\frac{{p^{\ell}+1}}{2}+\left(\frac{u^{-1}}{2}\right)^\frac{{p^{\ell}+1}}{2}\right) \\
  &=2D_{\frac{p^{\ell}+1}{2}}\left(\frac{u+u^{-1}}{2},\frac{1}{4}\right) \\
  &=2D_{\frac{p^{\ell}+1}{2}}\left(\alpha,\frac{1}{4}\right).
 \end{align*} 
Hence, the theorem is proved.
\end{proof}

\begin{rmk}
Theorem~\textup{\ref{maint}} above completes Theorem $6.1$ of~\cite{BT}.  Proposition~$8$ of~\cite{Li} is a particular case of the above theorem with $p=3$. Also, the above theorem provides a simpler proof of~\cite[Proposition 4.1]{BT} in the particular case of $\ell=2$.
\end{rmk}

Our focus is now to study the perfect $c$-nonlinearity of the power map $x^{\frac{p^\ell +1}{2}}$ over $\mathbb{F}_{p^n}$, where $\ell \geq 0$ and $n>1$ (note that this has been also investigated in~\cite{RS20}). As alluded to in Introduction, we shall consider the perfect $c$-nonlinearity of permutation polynomials only. In view of this, we shall first examine the permutation behaviour of the power map $x^{\frac{p^\ell +1}{2}}$. We may impose a restriction of $\ell <n$, so as to ensure that the exponent ${\frac{p^\ell +1}{2}}$ does not exceed $p^n-1$. The following theorem gives the necessary and sufficient conditions on $\ell$ and $n$ for which the power map $x^{\frac{p^\ell +1}{2}}$ is a permutation of $\mathbb{F}_{p^n}$. Surely, we can find it as a particular case of existing permutation classes, but our proof is short enough to warrant an inclusion here.

\begin{thm} 
\label{l2}
The power map $x^{\frac{p^\ell +1}{2}}$ is a permutation of $\mathbb{F}_{p^n}$ if and only if any one of the following conditions hold\textup{:}
 \begin{enumerate}
  \item[$(1)$] $\ell = 0$\textup{;}
   \item[$(2)$] $\ell$ is even and $n$ is odd\textup{;}
   \item[$(3)$] $\ell$ is even and $n$ is even together with $t_2 \geq t_1$, where $n = 2^{t_1}u$ and $\ell = 2^{t_2}v$ such that $2 \nmid u,v$\textup{;}
  \item[$(4)$] $\ell$ is odd, $n$ is odd and $p \equiv 1 \pmod 4$.
 \end{enumerate}
\end{thm}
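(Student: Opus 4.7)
The plan is to reduce the problem to the classical criterion that $x^d$ permutes $\mathbb{F}_{p^n}$ iff $\gcd(d,p^n-1)=1$, and then to compute $\gcd\!\left(\tfrac{p^\ell+1}{2},\,p^n-1\right)$ by means of Lemma~\ref{gcd} together with a short $2$-adic analysis. The case $\ell=0$ gives $d=1$, which is trivial, so from now on assume $\ell\geq 1$.

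Set $g=\gcd(\ell,n)$. I split on the parity of $n/g$, which is exactly the dichotomy in Lemma~\ref{gcd}.

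\emph{Case 1: $n/g$ is odd.} Lemma~\ref{gcd} gives $\gcd(p^\ell+1,p^n-1)=2$. Hence any common divisor of $(p^\ell+1)/2$ and $p^n-1$ divides $2$, and since $p^n-1$ is even, the gcd equals $1$ precisely when $(p^\ell+1)/2$ is odd, i.e.\ when $p^\ell\equiv 1\pmod 4$. If $\ell$ is even then $p^\ell\equiv 1\pmod 4$ automatically (because $p^2\equiv 1\pmod 4$ for odd $p$), so the power map is a permutation; if $\ell$ is odd then $p^\ell\equiv p\pmod 4$ and the condition becomes $p\equiv 1\pmod 4$. Translating the hypothesis that $n/g$ is odd via the $2$-adic decompositions $n=2^{t_1}u$, $\ell=2^{t_2}v$ with $u,v$ odd, one finds $n/g=2^{t_1-\min(t_1,t_2)}\cdot(u/\gcd(u,v))$, which is odd iff $t_1\leq t_2$. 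This yields precisely conditions (2), (3), and (4) of the theorem.

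\emph{Case 2: $n/g$ is even.} Lemma~\ref{gcd} gives $\gcd(p^\ell+1,p^n-1)=p^g+1$. Since $\ell/g$ must then be odd, the factorization $p^\ell+1=(p^g+1)Q$ with $Q=p^{\ell-g}-p^{\ell-2g}+\cdots+1$ shows that $Q$ is odd (its $\ell/g$ terms are each $\pm1\pmod 2$ and $\ell/g$ is odd). Consequently $(p^g+1)/2$ divides $(p^\ell+1)/2$, so
\[
\gcd\!\left(\tfrac{p^\ell+1}{2},\,p^n-1\right)\;\geq\;\tfrac{p^g+1}{2}\;\geq\;2,
\]
and the power map fails to be a permutation. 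This case occurs exactly when $\ell$ is odd with $n$ even, or when $\ell$ and $n$ are both even with $t_2<t_1$, or when $\ell$ is odd, $n$ is odd and $p\equiv 3\pmod 4$ (the latter being absorbed into Case~1's failure branch). These are precisely the complements of conditions (1)--(4).

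Assembling the two cases establishes both directions of the equivalence. The only delicate step is the $2$-adic bookkeeping in Case~1, namely showing that $(p^\ell+1)/2$ is odd iff $\ell$ is even or $p\equiv 1\pmod 4$, and translating the parity of $n/g$ into the explicit conditions on $t_1,t_2$; everything else is a routine application of Lemma~\ref{gcd}.
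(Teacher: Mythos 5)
Your proof is correct and follows essentially the same route as the paper: reduce to the criterion $\gcd\bigl(\frac{p^\ell+1}{2},p^n-1\bigr)=1$, invoke Lemma~\ref{gcd}, and do the parity/$2$-adic bookkeeping to translate ``$n/\gcd(\ell,n)$ odd'' into conditions $(2)$--$(4)$. The only cosmetic difference is the order of the case split (you split first on the parity of $n/\gcd(\ell,n)$, the paper first on the parity of the exponent), which costs you the small extra divisibility argument with $(p^g+1)/2$ in Case~2 but changes nothing essential.
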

\begin{proof}
The case $\ell=0$ is trivial. For $\ell \neq 0$, if the exponent $\displaystyle \frac{p^{\ell}+1}{2}$ is even, $\displaystyle \mbox{gcd}~\left(\frac{p^{\ell}+1}{2}, p^n-1\right) \geq 2$ and thus, the power map $\displaystyle X^{\frac{p^{\ell}+1}{2}}$ is not a permutation of $\mathbb F_{p^n}$. We shall, therefore, consider the case when $\displaystyle \frac{p^{\ell}+1}{2}$ is odd. It is easy to see that $\displaystyle \frac{p^{\ell}+1}{2}$ is odd if and only if $\ell$ is even or $\ell$ is odd and $p\equiv 1 \pmod 4.$
If we assume that $\displaystyle \frac{p^{\ell}+1}{2}$ is odd, then a direct application of Lemma~\ref{gcd} shows that $\displaystyle X^{\frac{p^\ell +1}{2}}$ is a permutation of $\mathbb{F}_{p^n}$ if and only if $\displaystyle \gcd~\left(\frac{p^{\ell}+1}{2}, p^n-1\right) = 1$, that is, $ \displaystyle \gcd~\left(p^{\ell}+1, p^n-1\right) = 2$, which is equivalent to $\displaystyle \frac{n}{\gcd(\ell,n)}$ is odd.
Further, under the assumption that $\displaystyle \frac{p^{\ell}+1}{2}$ is odd,  we observe that $\displaystyle \frac{n}{\gcd(\ell,n)}$ is odd if and only if one of later three conditions of the statement of the theorem holds and hence, the theorem is proved.
\end{proof}

Although the map $\displaystyle x^{\frac{p^\ell +1}{2}}$ is a permutation of $\mathbb{F}_{p^n}$ when both $\ell,n$ are odd and $p \equiv 1 \pmod 4$,  the following theorem tells that it ceases to be perfect $(-1)$-nonlinear over $\mathbb{F}_{p^n}$ (compare with~\cite[Theorem 8]{RS20}).

\begin{thm} 
\label{l3}
If both $\ell,n$ are odd and $p\equiv 1 \pmod 4$, then the power map $x^{\frac{p^\ell +1}{2}}$ is not perfect $(-1)$-nonlinear over $\mathbb{F}_{p^n}$.
\end {thm}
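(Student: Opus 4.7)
The plan is to reduce the perfect $(-1)$-nonlinearity question, via Lemma~\ref{equiv-def} and Theorem~\ref{maint}, to a non-permutation statement about a Dickson polynomial, and then rule out that permutation property through a gcd computation based on Lemma~\ref{gcd} and Lemma~\ref{DPP}. Set $d=\frac{p^\ell+1}{2}$. Since $\ell$ is odd and $p\equiv 1\pmod 4$, the power map $x^d$ is a permutation by Theorem~\ref{l2}(4), so by Lemma~\ref{equiv-def} it suffices to show that $(x+1)^d+(x-1)^d$ fails to permute $\mathbb{F}_{p^n}$.

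First I would verify that the exponent $d$ fits the hypotheses of Theorem~\ref{maint}: a short base-$p$ expansion gives $d=\frac{p+1}{2}+\sum_{j=1}^{\ell-1}\frac{p-1}{2}\,p^j$, i.e.\ $a_0=\frac{p+1}{2}$ and $a_j=\frac{p-1}{2}$ for $1\le j\le\ell-1$, which is precisely case~(2) with $k=\ell-1$. Hence Theorem~\ref{maint} yields $(x+1)^d+(x-1)^d=2D_d(x,\epsilon)$ for some $\epsilon\in\mathbb{F}_p^*$, and by Lemma~\ref{DPP} the right-hand side is a permutation of $\mathbb{F}_{p^n}$ if and only if $\gcd(d,p^{2n}-1)=1$. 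So the whole argument now reduces to proving that $\gcd\!\left(\tfrac{p^\ell+1}{2},\,p^{2n}-1\right)>1$.

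The second step is this gcd computation. I would apply Lemma~\ref{gcd}(2) with the pair $(\ell,2n)$: since $\ell$ is odd, $\gcd(\ell,2n)=\gcd(\ell,n)=:e$, and $\frac{2n}{e}=2\cdot\frac{n}{e}$ is automatically even, so
\[
\gcd(p^\ell+1,\,p^{2n}-1)=p^{e}+1.
\]
Because $e\mid\ell$ is odd and $p\equiv 1\pmod 4$, we have $p^{e}\equiv 1\pmod 4$, hence $p^{e}+1\equiv 2\pmod 4$, so $g':=\frac{p^{e}+1}{2}$ is an odd integer, and $g'\ge\frac{p+1}{2}\ge 3$ as $p\ge 5$.

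Finally, from $p^{e}+1\mid p^\ell+1=2d$ combined with $\gcd(g',2)=1$, I would conclude $g'\mid d$; likewise $g'\mid p^{2n}-1$ since $p^{e}+1\mid p^{2n}-1$. Therefore $\gcd(d,p^{2n}-1)\ge g'>1$, so $D_d(\cdot,\epsilon)$, and hence $(x+1)^d+(x-1)^d$, is not a permutation of $\mathbb{F}_{p^n}$, and $x^d$ is not perfect $(-1)$-nonlinear. The only mildly delicate point in the plan is confirming that the factor of $2$ dividing $p^\ell+1$ is the entire $2$-part of $\gcd(p^\ell+1,p^{2n}-1)$, which is exactly what the congruence $p^e+1\equiv 2\pmod 4$ buys us; the rest is bookkeeping.
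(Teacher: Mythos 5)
Your proposal is correct and follows essentially the same route as the paper: reduce via Lemma~\ref{equiv-def} and Theorem~\ref{maint} to the permutation behaviour of a Dickson polynomial, apply Lemma~\ref{DPP}, and kill the required coprimality with Lemma~\ref{gcd} applied to the pair $(\ell,2n)$. Your explicit extraction of the odd factor $\frac{p^{e}+1}{2}$ is just a more detailed rendering of the paper's observation that $\frac{p^\ell+1}{2}$ is odd, so $\gcd\bigl(\frac{p^\ell+1}{2},p^{2n}-1\bigr)=1$ would force $\gcd(p^\ell+1,p^{2n}-1)=2$, which fails since $\frac{2n}{\gcd(\ell,2n)}$ is even.
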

\begin{proof}
Since $\ell$ is odd and $p \equiv 1 \pmod 4$, $ \displaystyle \frac{p^{\ell}+1}{2}$ is odd. Now, by a direct application of Lemma~\ref{equiv-def}, Theorem~\ref{maint} and Lemma~\ref{DPP} at the appropriate places, we obtain the following equivalence 
\allowdisplaybreaks
\begin{align*}
& x^{\frac{p^\ell +1}{2}}~\text{is PcN over}~\mathbb F_{p^n} \\
 \iff & \displaystyle (x+1)^{\frac{p^{\ell}+1}{2}}+(x-1)^{\frac{p^{\ell}+1}{2}}~\text{is a permutation of}~ \mathbb{F}_{p^n}\\
 \iff & \displaystyle D_{\frac{p^{\ell}+1}{2}}\left(x,\frac{1}{4}\right)~\text{is a permutation of}~ \mathbb{F}_{p^n}, \forall ~1\leq \ell < n\\ 
\iff &\displaystyle \gcd~\left(\frac{p^{\ell}+1}{2}, p^{2n}-1\right)=1\\
\iff & \displaystyle \gcd~\left(p^\ell+1,p^{2n}-1\right)=2\\
\iff & \displaystyle \frac{2n}{\gcd~(\ell,2n)}~\text{is odd}.
\end{align*}
But since $\ell$ and $n$ are odd, $\displaystyle \frac{2n}{\gcd~(\ell,2n)}$ is never odd and we are done.
\end{proof}
In view of Theorem~\ref{l3}, it remains to check perfect $(-1)$-nonlinearity of the map $\displaystyle x^{\frac{p^\ell +1}{2}}$ only under the first three conditions of Theorem~\ref{l2} which essentially make it a permutation of $\mathbb F_{p^n}$. Notice that the first three conditions of Theorem~\ref{l2} have a common property that $\ell$ is even. Thus, it makes sense to assume that $\ell$ is even and prove the following theorem that gives  necessary and sufficient conditions on $\ell$ and $n$ for which the power map $x^{\frac{p^\ell +1}{2}}$ is perfect $(-1)$-nonlinear over $\mathbb{F}_{p^n}$  (compare with~\cite[Theorem 8]{RS20}, which also investigates the map).
\begin{thm}
\label{mainp}
The power map $\displaystyle x^{\frac{p^\ell +1}{2}}$ is perfect $(-1)$-nonlinear over $\mathbb{F}_{p^n}$ if and only if any one of the following conditions holds\textup{:}
 \begin{enumerate}
  \item[$(1)$] $\ell = 0$\textup{;}
   \item[$(2)$] $\ell$ even and $n$ odd\textup{;}
   \item[$(3)$] $\ell$ even and $n$ even together with $t_2 \geq t_1+1$, where $n = 2^{t_1}u$ and $\ell = 2^{t_2}v$ such that $2 \nmid u,v$.
 \end{enumerate}
\end{thm}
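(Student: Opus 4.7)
The plan is to mirror the equivalence chain used in the proof of Theorem \ref{l3} and then to carry out a careful $2$-adic valuation analysis of the resulting gcd condition. First I dispose of the case $\ell=0$ directly: the map becomes $x^{(1+1)/2}=x$, whose $(-1)$-derivative $(x+a)+x=2x+a$ is affine and hence a permutation of $\mathbb{F}_{p^n}$ (using $p$ odd), so the monomial is trivially PcN.

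For $\ell\geq 1$ with $\ell$ even, the exponent $d=(p^\ell+1)/2$ is odd (because $p^\ell\equiv 1\pmod 8$ when $p$ is odd and $\ell$ is even, so $p^\ell+1\equiv 2\pmod 8$). This is the crucial parity needed to apply the earlier machinery. I then chain together the tools from the excerpt exactly as in Theorem \ref{l3}:
\begin{align*}
x^{\frac{p^\ell+1}{2}}\text{ is PcN over }\mathbb{F}_{p^n}
&\iff (x+1)^{\frac{p^\ell+1}{2}}+(x-1)^{\frac{p^\ell+1}{2}}\text{ permutes }\mathbb{F}_{p^n} \\
&\iff D_{\frac{p^\ell+1}{2}}\!\left(x,\tfrac14\right)\text{ permutes }\mathbb{F}_{p^n} \\
&\iff \gcd\!\left(\tfrac{p^\ell+1}{2},\,p^{2n}-1\right)=1 \\
&\iff \gcd(p^\ell+1,\,p^{2n}-1)=2 \\
&\iff \tfrac{2n}{\gcd(\ell,2n)}\text{ is odd},
\end{align*}
where the first step is Lemma \ref{equiv-def}, the second uses Theorem \ref{maint} (case (2) with $k+1=\ell$), the third is N\"obauer's Lemma \ref{DPP} applied in $\mathbb{F}_{p^n}$, the fourth uses that $(p^\ell+1)/2$ is odd while $p^{2n}-1$ is even, and the last is Lemma \ref{gcd}(1) applied with exponent $\ell$ over $\mathbb{F}_{p^{2n}}$.

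It then remains to identify, under the standing assumption that $\ell$ is even, precisely when $2n/\gcd(\ell,2n)$ is odd. Writing $n=2^{t_1}u$, $\ell=2^{t_2}v$ with $u,v$ odd (so $t_2\geq 1$ since $\ell$ is even) gives $2n=2^{t_1+1}u$ and
\[
\gcd(\ell,2n)=2^{\min(t_2,\,t_1+1)}\gcd(v,u),
\qquad
\frac{2n}{\gcd(\ell,2n)}=2^{t_1+1-\min(t_2,t_1+1)}\,\frac{u}{\gcd(v,u)}.
\]
Since $u/\gcd(v,u)$ is odd, the quotient is odd precisely when the exponent of $2$ vanishes, i.e. when $\min(t_2,t_1+1)=t_1+1$, equivalently $t_2\geq t_1+1$. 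Splitting by the parity of $n$: if $n$ is odd then $t_1=0$ and the condition $t_2\geq 1$ is automatic (this is conclusion (2)); if $n$ is even then $t_1\geq 1$ and the condition becomes $t_2\geq t_1+1$ (this is conclusion (3)). Combined with the earlier exclusion in Theorem \ref{l3} of the case where $\ell,n$ are both odd (and with the non-permutation cases ruled out by Theorem \ref{l2}), this exhausts all possibilities and yields exactly conditions (1)--(3).

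The only genuinely delicate point is verifying that the hypotheses of Theorem \ref{maint} apply to $d=(p^\ell+1)/2$ for every $\ell\geq 1$; this is essentially automatic, but one should note the base-$p$ expansion has the prescribed digits $a_0=(p+1)/2$, $a_1=\cdots=a_{\ell-1}=(p-1)/2$ and is what legitimizes the replacement of the difference polynomial by the Dickson polynomial. After that, the argument is purely a $2$-adic bookkeeping exercise, which I expect to be routine but is the step where sign errors and off-by-one mistakes (e.g.\ confusing $\gcd(\ell,n)$ with $\gcd(\ell,2n)$) are easiest to make.
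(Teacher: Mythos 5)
Your proposal is correct and follows essentially the same route as the paper's own proof: reduce via Lemma~\ref{equiv-def}, Theorem~\ref{maint} and Lemma~\ref{DPP} to the condition that $2n/\gcd(\ell,2n)$ is odd, then translate that into conditions $(2)$--$(3)$, with Theorems~\ref{l2} and~\ref{l3} disposing of the case $\ell$ odd. The only difference is that you write out explicitly the $2$-adic bookkeeping ($\gcd(\ell,2n)=2^{\min(t_2,t_1+1)}\gcd(u,v)$, so the quotient is odd iff $t_2\geq t_1+1$), which the paper dismisses as ``easy to see''.
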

\begin{proof} From Theorem ~\ref{l2} and Theorem ~\ref{l3}, it is clear that we need to check the perfect $(-1)$-nonlinearity of the map $\displaystyle x^{\frac{p^\ell +1}{2}}$ only when $\ell$ is even. The case $\ell =0$ is trivial. Suppose $\ell \neq 0$. Since $\ell$ is even, $\displaystyle \frac{p^{\ell}+1}{2}$ is odd.  Now by the similar arguments as in the proof of Theorem~\ref{l3} based on Lemma~\ref{equiv-def}, Theorem~\ref{maint} and Lemma~\ref{DPP} , we arrive at the following
\begin{equation*}
\begin{split}
\displaystyle x^{\frac{p^\ell +1}{2}}~\text{is PcN over}~\mathbb F_{p^n}~\text{if and only if}~ \displaystyle \frac{2n}{\gcd~(\ell,2n)}~\text{is odd}. 
\end{split}
\end{equation*}
It is easy to see that  $\displaystyle \frac{2n}{\gcd~(\ell,2n)}$ is odd if and only if one of the latter two conditions of the statement of the theorem is true and thus, we are done. 
\end{proof}
\begin{rmk}
 Observe that Theorem~\textup{\ref{mainp}}  gives a simpler proof of~\cite[Theorem 5]{ZM}, which, in turn, provides a simpler proof of a conjecture of Bartoli and Timpanella~\cite[Conjecture 4.7]{BT}, already settled in \cite{ZM}.
\end{rmk}

\section{Power maps with Low $c=-1$ differential Uniformity}\label{S3}

Due to their wide range of applications in symmetric key cryptography,  functions with low differential uniformity are very important objects. In this section, we give some classes of power maps (monomials) with low $c$-differential uniformity for $c=-1$. We first recall a useful lemma~\cite{SL} related to the Dickson polynomial of the first kind, which is more general than Lemma~\ref{DPP} (see~\cite{WN}).

\begin{lem}\cite[Proposition 41]{SL}
 \label{Dgcd}
Let $a\in\mathbb{F}_{p^n}^*$, and let $D_d(x, a)$ be the Dickson polynomial of the first kind. Then $D_d(x, a)$ is an $m$-to-$1$ function over  $\mathbb{F}_{p^n}$ if and only if $\displaystyle \gcd\left(d, p^{2n} -1\right) = m$.
\end{lem}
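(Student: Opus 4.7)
The plan is to exploit the substitution $x=y+a/y$ underlying Dickson's identity~\eqref{dickson} and reduce the fiber analysis of $D_d(\cdot,a)\colon\mathbb{F}_{p^n}\to\mathbb{F}_{p^n}$ to a $d$-th power count on the cyclic group $\mathbb{F}_{p^{2n}}^*$. Since this group has order $p^{2n}-1$, the power map $y\mapsto y^d$ on it is $\gcd(d,p^{2n}-1)$-to-$1$ on its image, and this is the source of the number $m=\gcd(d,p^{2n}-1)$ in the statement. Note that the $m=1$ case is already covered by Lemma~\ref{DPP}, so one can read the claim as the natural extension of that permutation criterion.

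First I would set up the parametrization. For each $x\in\mathbb{F}_{p^n}$ the quadratic $t^2-xt+a=0$ has roots $y,\,a/y\in\mathbb{F}_{p^{2n}}^*$; define
\[
S_a=\mathbb{F}_{p^n}^*\cup C_a,\qquad C_a=\{y\in\mathbb{F}_{p^{2n}}^*:y^{p^n+1}=a\}.
\]
A short computation using $a^{p^n}=a$ shows that $S_a$ is precisely the locus where $y+a/y\in\mathbb{F}_{p^n}$. The map $\phi_a\colon S_a\to\mathbb{F}_{p^n}$, $y\mapsto y+a/y$, is surjective and generically $2$-to-$1$ via the involution $\iota\colon y\mapsto a/y$, with branching at the at most two points $y^2=a$. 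Identity~\eqref{dickson} yields the key compatibility
\[
D_d(\phi_a(y),a)=y^d+a^d/y^d=\phi_{a^d}(y^d),
\]
so $D_d(\cdot,a)$ is the descent of the $d$-th power map $y\mapsto y^d\colon S_a\to S_{a^d}$ through the double covers $\phi_a$ and $\phi_{a^d}$.

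Next I would count fibers. We have $D_d(x_1,a)=D_d(x_2,a)$ iff $y_2^d\in\{y_1^d,(a/y_1)^d\}$, i.e.\ $y_2$ lies in the union of the two cosets $y_1\mu_d$ and $(a/y_1)\mu_d$, where $\mu_d\subset\mathbb{F}_{p^{2n}}^*$ denotes the cyclic subgroup of $d$-th roots of unity, of order $m$. The involution $\iota$ interchanges these two cosets bijectively, and $\phi_a$ is $2$-to-$1$ via $\iota$, so the fiber of $D_d(\cdot,a)$ over $D_d(x_1,a)$ has (generically) exactly $m$ elements, in agreement with the claim.

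The main technical obstacle is the careful bookkeeping of $(y_1\mu_d)\cap S_a$, because $S_a$ is \emph{not} a subgroup of $\mathbb{F}_{p^{2n}}^*$. The necessary accounting uses the subgroup intersections $|\mu_d\cap\mathbb{F}_{p^n}^*|=\gcd(d,p^n-1)$ and $|\mu_d\cap K|=\gcd(d,p^n+1)$, where $K=\{y:y^{p^n+1}=1\}$ is the norm-$1$ subgroup, together with the multiplicative closure properties $\mathbb{F}_{p^n}^*\cdot\mathbb{F}_{p^n}^*\subset\mathbb{F}_{p^n}^*$ and $C_a\cdot K\subset C_a$. A case split according to whether $y_1\in\mathbb{F}_{p^n}^*$ (split case) or $y_1\in C_a\setminus\mathbb{F}_{p^n}^*$ (inert case) handles both alternatives; the branch locus $y^2=a$ and any exceptional $y_1$ with $y_1^{2d}=a^d$ contribute only $O(1)$ deviations that do not affect the generic fiber count. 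Both directions of the equivalence follow from this, since any other value of $m$ would produce a different generic fiber size for $D_d(\cdot,a)$.
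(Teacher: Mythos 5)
You should first note that the paper itself offers no proof of this lemma: it is quoted verbatim from \cite{SL} (Proposition 41), so there is no internal argument to compare against, and your write-up has to stand entirely on its own. Your setup is the standard one and is correct as far as it goes: the parametrization $x=y+a/y$ with $y\in S_a=\mathbb{F}_{p^n}^*\cup C_a$, the identity $D_d(y+a/y,\,a)=y^d+a^d/y^d$ coming from \eqref{dickson}, the fact that $\phi_a$ is onto and $2$-to-$1$ away from $y^2=a$, and the observation that preimages of $D_d(x_1,a)$ correspond to the points of $\bigl(y_1\mu_d\cup(a/y_1)\mu_d\bigr)\cap S_a$ modulo the involution $y\mapsto a/y$.

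The genuine gap is the counting step, which is the entire content of the lemma and which you assert rather than carry out --- and, as stated (``the fiber has generically exactly $m$ elements''), it is false. A coset $y_1\mu_d$ is not generically contained in $S_a$: it meets $\mathbb{F}_{p^n}^*$ in either $0$ or $\gcd(d,p^n-1)$ points and meets $C_a$ in either $0$ or $\gcd(d,p^n+1)$ points, so $|y_1\mu_d\cap S_a|\le \gcd(d,p^n-1)+\gcd(d,p^n+1)$, which is in general far smaller than $m=\gcd(d,p^{2n}-1)$ (up to a factor of $2$, $m$ is the \emph{product} of those two gcds). Concretely, take $p^n=5$, $d=3$, $a=1$: then $m=\gcd(3,24)=3$, but $D_3(x,1)=x^3-3x$ has fibers $\{0\},\{1,3\},\{2,4\}$ over $\mathbb{F}_5$, none of size $3$; so the ``generic fiber size $m$'' claim, and with it your closing sentence that ``any other value of $m$ would produce a different generic fiber size'' (which presupposes exactly what must be proved), break down. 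The deferred ``bookkeeping'' is therefore not an $O(1)$ correction at branch points: the whole proof lives in the case analysis of how cosets of $\mu_d$ meet $\mathbb{F}_{p^n}^*$ versus the norm set $C_a$, governed separately by $\gcd(d,p^n-1)$ and $\gcd(d,p^n+1)$, together with the precise (relaxed) meaning of ``$m$-to-$1$'' used in \cite{SL} --- the same example shows the statement is sensitive to that convention. What your setup does yield correctly, and what Theorem~\ref{uniformity} actually needs in one direction, is the upper bound: every fiber has at most $m$ elements, since the preimage set in $S_a$ sits inside two cosets of size $m$, is halved by the involution, and a fixed point $y^2=a$ forces the two cosets to coincide. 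The equivalence itself requires the finer analysis you have not supplied.
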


Now we shall prove the following theorem that gives $(-1)$-differential uniformity of the map $\displaystyle x^{\frac{p^\ell+1}{2}}$ over $\mathbb{F}_{p^n}$ under certain restrictions. Two of us found the $(-1)$-uniformity of this map in its generality in~\cite{RS20}, but with much more effort, so we thought that the following simpler approach in the next theorem is worth including here, albeit the result being weaker.

\begin{thm}
\label{uniformity}
Let $\displaystyle x^{\frac{p^{\ell}+1}{2}}$ be a power map from $\mathbb{F}_{p^n}$ to itself and $\gcd (\ell, 2n)=1$, $p$ an odd prime.  If $p\equiv 1 \pmod 4$, or $p\equiv 3 \pmod 8$, then the $(-1)$-differential uniformity of $\displaystyle x^{\frac{p^\ell+1}{2}}$ over $\mathbb{F}_{p^n}$ is $ \displaystyle \frac{p+1}{2}$. 
\end{thm}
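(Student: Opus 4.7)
The plan is to translate the $(-1)$-differential count for $F(x)=x^d$, $d=(p^\ell+1)/2$, into a fibre-counting problem for the Dickson polynomial $D_d(x,1/4)$, and to compute the relevant gcd via Lemma~\ref{gcd}.

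First, I would reduce as in the proof of Lemma~\ref{equiv-def}. For $a\neq 0$, the substitutions $y=x/a$ and $z=2y+1$ yield a bijection between the fibre of $(x+a)^d+x^d$ over $b$ and the fibre of $(z+1)^d+(z-1)^d$ over $2^{d}b/a^{d}$. Since $d=(p^\ell+1)/2$ falls into Case~(2) of Theorem~\ref{maint} (with $k=\ell-1$), one has $(z+1)^d+(z-1)^d=2D_d(z,1/4)$. For $a=0$ the equation is $2x^d=b$, whose fibre has size at most $\gcd(d,p^n-1)$, a divisor of $\gcd(d,p^{2n}-1)$. Applying Lemma~\ref{Dgcd} to $D_d(\cdot,1/4)$ then gives
\[
{}_{-1}\Delta_F \;=\; \max_{b'\in\F_{p^n}}\#\{z\in\F_{p^n}:D_d(z,1/4)=b'\} \;\le\; \gcd\!\left(d,\,p^{2n}-1\right).
\]

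Second, I would evaluate this gcd. Since $\gcd(\ell,2n)=1$, the integer $\ell$ is odd and $2n/\gcd(\ell,2n)=2n$ is even, so Lemma~\ref{gcd}(2) gives $\gcd(p^\ell+1,p^{2n}-1)=p^{\gcd(\ell,2n)}+1=p+1$, i.e., $\gcd(2d,p^{2n}-1)=p+1$. Writing $p^\ell+1=(p+1)K$ with $K=\sum_{j=0}^{\ell-1}(-p)^j$, the oddness of $\ell$ forces $K\equiv\ell\equiv 1\pmod 2$, so $d=\frac{p+1}{2}\cdot K$ with $K$ odd. Therefore $(p+1)/2$ divides both $d$ and $p^{2n}-1$ (via $p+1\mid p^{2n}-1$), so $(p+1)/2\mid \gcd(d,p^{2n}-1)\mid p+1$. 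The gcd equals $p+1$ only if $2\mid K$, which is false, so $\gcd(d,p^{2n}-1)=(p+1)/2$.

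Third, for the matching lower bound I would use the parametrization appearing in the sufficiency proof of Theorem~\ref{maint}: $D_d(\alpha,1/4)=(u/2)^d+(u^{-1}/2)^d$ for $u\in\F_{p^{2n}}^*$ with $u+u^{-1}=2\alpha$. For a $d$-th power $v\in\F_{p^{2n}}^*$ with $v\neq\pm 1$, the equation $u^d=v$ has exactly $(p+1)/2$ solutions in $\F_{p^{2n}}^*$, which pair off as $\{u,u^{-1}\}$ and give distinct $\alpha=(u+u^{-1})/2\in\F_{p^n}$, provided $u+u^{-1}\in\F_{p^n}$ for every such root. The hypothesis $p\equiv 1\pmod 4$ or $p\equiv 3\pmod 8$---equivalently $v_2((p+1)/2)\le 1$---is then what ensures all these $d$-th roots lie in $\F_{p^n}^*\cup\{u:u^{p^n+1}=1\}$, i.e.\ in the set of elements of $\F_{p^{2n}}^*$ whose trace down to $\F_{p^n}$ still lies in $\F_{p^n}$, so the fibre of $D_d(\cdot,1/4)$ inside $\F_{p^n}$ achieves the full size $(p+1)/2$. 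The main obstacle will be precisely this attainment step: the upper bound drops out essentially formally from the reduction and the gcd calculation, but producing a witness of a fibre of exactly size $(p+1)/2$ inside $\F_{p^n}$ requires a careful $2$-adic comparison of $(p+1)/2$ against $p^n\pm 1$, and it is this comparison---and the resulting subgroup containment for the kernel of $u\mapsto u^d$---that the arithmetic hypothesis on $p$ modulo $8$ is designed to secure.
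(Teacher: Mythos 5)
Your reduction of the $a\neq 0$ case to fibres of $D_d(x,1/4)$ via the substitutions of Lemma~\ref{equiv-def} and Theorem~\ref{maint}, and your computation $\gcd\bigl(\tfrac{p^\ell+1}{2},p^{2n}-1\bigr)=\tfrac{p+1}{2}$, match the paper's argument and give the upper bound correctly (your parity-of-$K$ argument is in fact cleaner than the paper's, which invokes the congruence hypotheses on $p$ precisely to make this gcd step immediate). The genuine gap is the attainment of the bound. The theorem asserts equality, and your third step does not prove it: the claim that the solutions of $u^d=v$ ``pair off as $\{u,u^{-1}\}$'' is false (if $u^d=v$ then $(u^{-1})^d=v^{-1}$, so it is the union of the two fibres over $v$ and $v^{-1}$ that is stable under inversion, not a single fibre), and the decisive point --- that one can choose a fibre all of whose associated $u$'s lie in $\F_{p^n}^*\cup\{u:u^{p^n+1}=1\}$, so that the $(p+1)/2$ preimages actually sit inside $\F_{p^n}$ --- is only announced as ``the main obstacle'' and deferred, not carried out. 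As written, you have proved only ${}_{-1}\Delta_F\le\frac{p+1}{2}$.

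Note also that the missing step is available at no cost from the very lemma you already cite: Lemma~\ref{Dgcd} is an if-and-only-if statement that $D_d(x,1/4)$ is $m$-to-$1$ \emph{over} $\F_{p^n}$ with $m=\gcd(d,p^{2n}-1)$, so once $m=\frac{p+1}{2}$ is known, some fibre in $\F_{p^n}$ has exactly $\frac{p+1}{2}$ elements (otherwise the map would not be $m$-to-$1$); this is exactly how the paper closes the argument in one line. Relatedly, your diagnosis of the hypothesis $p\equiv 1\pmod 4$ or $p\equiv 3\pmod 8$ is off: it is not there to ``secure'' the attainment via a $2$-adic subgroup containment, but is used in the paper only to shortcut the passage from $\gcd(p^\ell+1,p^{2n}-1)=p+1$ to $\gcd\bigl(\tfrac{p^\ell+1}{2},p^{2n}-1\bigr)=\tfrac{p+1}{2}$ --- a step your own argument handles without it. If you insist on an explicit witness instead of Lemma~\ref{Dgcd}, you must actually produce one, e.g.\ by taking $u_0$ in the subgroup of order $p^n+1$ and checking that the coset $u_0\,\mu_{(p+1)/2}$ (together with its inverse coset) yields $\frac{p+1}{2}$ distinct values $\alpha=(u+u^{-1})/2\in\F_{p^n}$ with the same Dickson value; that verification is precisely what is absent from your sketch.
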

\begin{proof} 
Since $\gcd (\ell, 2n)=1$, $\ell$ is odd. Thus, $p\equiv 1 \pmod 4$ implies that $p^{\ell}+1 \equiv 2 \pmod 4 $, i.e, $\displaystyle \frac{p^\ell+1}{2}$ is odd (we will only show the first claim as the second is rather similar: we, however, use that if $p\equiv 3 \pmod 8$ implies that $p^{\ell}+1 \equiv 4 \pmod 8 $, that is, $\displaystyle \frac{p^\ell+1}{4}$ is odd). Now we will show that for all $a,b \in \mathbb{F}_{p^n}$, the following equation
\begin{equation}\label{eq3.1}
 (x+a)^{\frac{p^{\ell}+1}{2}}+x^{\frac{p^{\ell}+1}{2}} = b
\end{equation}
has at most $\displaystyle \frac{p+1}{2}$ solutions in $\mathbb{F}_{p^n}$. We first consider the case when $a=0$. In this case,  Equation~\eqref{eq3.1} can have at most $\gcd \left(\frac{p^{\ell}+1}{2}, p^n-1\right)$ roots. By Lemma~\ref{gcd}, if $n$ is odd, then $\displaystyle \gcd \left(p^{\ell}+1, p^n-1\right)=2$ and if $n$ is even, then $\displaystyle \gcd \left(p^\ell+1, p^n-1\right)= p+1$. Therefore, $\displaystyle \gcd \left(\frac{p^{\ell}+1}{2}, p^n-1\right)=1$ for $n$ odd and $\displaystyle \gcd \left(\frac{p^{\ell}+1}{2}, p^n-1\right)=\displaystyle \frac{p+1}{2}$ for $n$ even. Thus, for $a=0$, Equation~\eqref{eq3.1} can have at most $\displaystyle \frac{p+1}{2}$ solutions. We can be more precise:  for $a=0$, then Equation~\eqref{eq3.1} has one solution for $n$ odd and exactly $\frac{p+1}{2}$ solutions for $n$ even for some $b$, and we argue that  below. Let $\alpha$ be a primitive root in $\F_{p^n}$ and $\frac{b}{2}=\alpha^k$, for some~$k$. With $x=\alpha^y$, Equation~\eqref{eq3.1}  becomes $\alpha^{\frac{p^\ell+1}{2} y}=\alpha^k$.  We are reduced to the equation 
\begin{equation}
\label{eq3.1_1}
\frac{p^\ell+1}{2} y\equiv k\pmod{p^n-1}.
\end{equation}
If $\displaystyle \gcd \left(\frac{p^{\ell}+1}{2}, p^n-1\right)=m\in\left\{1,{\frac{p+1}{2}} \right\}$,   then Equation~\eqref{eq3.1_1} has solutions if and only if $m\,|\,k$, and under that assumption, using elementary number theory, there are exactly $m$  solutions $y$ for Equation~\eqref{eq3.1_1}, and they are $y_0,y_0+\frac{p^n-1}{m},y_0+2\frac{p^n-1}{m},\ldots, y_0+(m-1)\frac{p^n-1}{m}$, where $y_0=\frac{k}{m}\left(\frac{p^{\ell}+1}{2m} \right)^{-1}\pmod {\frac{p^n-1}{m}}$, thus inferring our claim (those $b$ for which we have the claim are of the form $b=2\alpha^k$, with $k\equiv 0\pmod m$).

 In the case of $a \neq 0$, we can take $a=1$ in~\eqref{eq3.1}. After relabelling, it is equivalent to find the maximum number of solutions of the equation
\begin{equation}
 (x+1)^{\frac{p^{\ell}+1}{2}}+(x-1)^{\frac{p^{\ell}+1}{2}}= b'
\end{equation}
in $\mathbb{F}_{p^n}$, where $b'\in \mathbb{F}_{p^n}$.
By Theorem~\ref{maint}, the above equation can be re-written as
\begin{equation}\label{eq3.3}
 D_{\frac{p^{\ell}+1}{2}}\left(x, \frac{1}{4}\right)= b'.
\end{equation}
Now, by Lemma~\ref{gcd}, we have $\gcd\left(p^{\ell}+1, p^{2n}-1\right)= p+1$ and hence, $\displaystyle \gcd \left(\frac{p^{\ell}+1}{2}, p^{2n}-1\right)= \displaystyle \frac{p+1}{2}$. Therefore, by Lemma~\ref{Dgcd}, Equation~\eqref{eq3.3} can have at most $\displaystyle \frac{p+1}{2}$ roots, however, with the bound being attained, otherwise $D_{\frac{p^{\ell}+1}{2}}\left(x, \frac{1}{4}\right)$ would not be $m$-to-$1$. This completes the proof.
\end{proof} 
The following are  immediate corollaries to Theorem~\ref{uniformity}.
\begin{cor}
Let $F(x)=x^{\frac{5^{\ell}+1}{2}}$ be a power function on $\mathbb{F}_{5^n}$, $G(x)=x^{\frac{13^{\ell}+1}{2}}$ on $\mathbb{F}_{13^n}$, and $\mbox{gcd}~(\ell, 2n)=1$. Then for $c=-1$, the $c$-differential uniformity of the function $F$ is $3$ and  the one of $G$ is~$7$. 
\end{cor}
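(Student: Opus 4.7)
The plan is to observe that this corollary is a direct specialization of Theorem~\ref{uniformity}, so the proof reduces to verifying that the hypotheses are met for the two primes $p=5$ and $p=13$ and then reading off the conclusion $\frac{p+1}{2}$.

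First I would check the congruence hypothesis on $p$. Since $5 = 4\cdot 1 + 1$, we have $5\equiv 1 \pmod 4$, so $p=5$ falls into the first alternative of Theorem~\ref{uniformity}. Similarly $13 = 4\cdot 3 + 1$ gives $13\equiv 1 \pmod 4$, placing $p=13$ in the same alternative. The assumption $\gcd(\ell,2n)=1$ is already built into the hypothesis of the corollary, so there is nothing extra to verify.

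Applying Theorem~\ref{uniformity} with $p=5$ then immediately yields that the $(-1)$-differential uniformity of $F(x)=x^{(5^\ell+1)/2}$ equals $\frac{5+1}{2}=3$. The same theorem applied with $p=13$ gives that the $(-1)$-differential uniformity of $G(x)=x^{(13^\ell+1)/2}$ equals $\frac{13+1}{2}=7$.

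Since the entire argument is a two-line substitution into an already proved theorem, there is no real obstacle; the only thing worth a brief remark is to confirm that both residues fall in the admitted class $p\equiv 1\pmod 4$ (rather than the second class $p\equiv 3\pmod 8$), which is what lets the stated numerical values be produced without further casework.
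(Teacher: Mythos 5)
Your proposal is correct and matches the paper's treatment: the paper states this as an immediate corollary of Theorem~\ref{uniformity}, and your verification that $5\equiv 1\pmod 4$ and $13\equiv 1\pmod 4$ together with the given hypothesis $\gcd(\ell,2n)=1$, followed by substituting $p=5$ and $p=13$ to get $\frac{p+1}{2}=3$ and $7$, is exactly the intended argument.
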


\begin{cor}
Let $\displaystyle F(x)=\displaystyle x^{\frac{3^{\ell}+1}{2}}$ be a power function on $\mathbb{F}_{3^n}$, $G(x)=\displaystyle x^{\frac{11^{\ell}+1}{2}}$ on $\mathbb{F}_{{11}^n}$, and  $\gcd~(\ell,2n)=1$. Then for $c=-1$, $F$ is an APcN function \textup{(}see also~\textup{\cite[Thm. 10]{CDU}}\textup{)}, and the $(-1)$-differential uniformity of $G$ is $6$. 
\end{cor}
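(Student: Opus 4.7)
The plan is to observe that both claims are immediate instances of Theorem~\ref{uniformity}, obtained by specializing the prime. First I would verify, for each case separately, that the ambient prime satisfies the modular hypothesis ``$p\equiv 1\pmod 4$ or $p\equiv 3\pmod 8$'' of Theorem~\ref{uniformity}.

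For the claim concerning $F(x)=x^{(3^\ell+1)/2}$ over $\mathbb{F}_{3^n}$, the prime is $p=3$, and clearly $3\equiv 3\pmod 8$. Combined with the standing assumption $\gcd(\ell,2n)=1$, Theorem~\ref{uniformity} applies and yields that the $(-1)$-differential uniformity of $F$ equals $(p+1)/2=2$. Since $c$-differential uniformity equal to $2$ is, by Definition~\ref{defCDU}, precisely the defining property of an APcN function, this proves the first claim. (As indicated in the statement, one may alternatively cite~\cite[Thm.~10]{CDU}.)

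For the claim concerning $G(x)=x^{(11^\ell+1)/2}$ over $\mathbb{F}_{11^n}$, the prime is $p=11$, and since $11=8+3$, we have $11\equiv 3\pmod 8$, so the modular hypothesis of Theorem~\ref{uniformity} is again satisfied. Combined once more with $\gcd(\ell,2n)=1$, Theorem~\ref{uniformity} gives $(-1)$-differential uniformity equal to $(11+1)/2=6$, which is exactly what is asserted.

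There is essentially no obstacle: both assertions are direct corollaries of Theorem~\ref{uniformity} obtained by a trivial check of the congruence $p\equiv 3\pmod 8$ at $p=3$ and $p=11$. The substantive work — the reduction via Lemma~\ref{equiv-def} to $(x+1)^{(p^\ell+1)/2}+(x-1)^{(p^\ell+1)/2}$, the identification of this with the Dickson polynomial $D_{(p^\ell+1)/2}(x,1/4)$ via Theorem~\ref{maint}, the $m$-to-$1$ analysis of Dickson polynomials through Lemma~\ref{Dgcd} together with the gcd computation of Lemma~\ref{gcd}, and the separate treatment of the case $a=0$ — has already been carried out inside the proof of Theorem~\ref{uniformity}, so nothing further is required here.
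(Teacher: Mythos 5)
Your proposal is correct and matches the paper exactly: the paper presents this result as an immediate corollary of Theorem~\ref{uniformity}, obtained by noting $3\equiv 3\pmod 8$ and $11\equiv 3\pmod 8$ so that the $(-1)$-differential uniformity is $(p+1)/2$, giving $2$ (APcN) and $6$ respectively. Nothing further is needed.
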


\section{P$c$N power functions over $\mathbb{F}_{p^5}$ with $c=-1$} 
\label{S4}

In this section, first we shall prove four propositions, which will be useful in the sequel.
\begin{prop}
\label{equv}
Let $c \in \mathbb{F}_p^*$ then the $c$-differential uniformity of the power functions $x^d$  and $x^{dp^j}, j\in \{0,1,\ldots,n-1\}$  over $\mathbb{F}_{p^n}$ is the same.
\end{prop}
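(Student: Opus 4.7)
The plan is to exploit the Frobenius automorphism $\phi \colon x \mapsto x^p$ on $\mathbb{F}_{p^n}$ together with the hypothesis $c \in \mathbb{F}_p^*$, which guarantees $c^{p^j} = c$ for every $j$. Writing $F(x) = x^d$ and $G(x) = x^{d p^j}$, I would set up a bijection between the solution sets of
\[
(x+a)^d - c\, x^d = b \quad \text{and} \quad (x+a)^{d p^j} - c\, x^{d p^j} = b',
\]
for appropriately matched right-hand sides $b, b'$.

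The main step is to raise the first equation to the $p^j$-th power. By the freshman's dream in characteristic $p$ and the identity $c^{p^j} = c$, the left-hand side becomes $\bigl((x+a)^d\bigr)^{p^j} - c\bigl(x^d\bigr)^{p^j} = (x+a)^{d p^j} - c\, x^{d p^j}$, while the right-hand side becomes $b^{p^j}$. Thus every $x$ with ${}_cD_a F(x) = b$ also satisfies ${}_cD_a G(x) = b^{p^j}$. The converse is obtained by raising the $G$-equation to the $p^{n-j}$-th power, using $c^{p^{n-j}} = c$ and $b^{p^n}=b$. Consequently, for every $a, b \in \mathbb{F}_{p^n}$,
\[
{}_c\Delta_F(a,b) = {}_c\Delta_G(a, b^{p^j}).
\]

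Since $b \mapsto b^{p^j}$ is a bijection of $\mathbb{F}_{p^n}$ and the condition $a\neq 0$ (when $c=1$) is preserved, taking the maximum of both sides over all admissible pairs $(a,b)$ yields ${}_c\Delta_F = {}_c\Delta_G$. The only subtle point is to identify precisely where the hypothesis $c \in \mathbb{F}_p$ enters, namely that Frobenius commutes with the $c$-differential operator exactly because $c$ is fixed by it; beyond this observation the argument is a direct computation, and I do not foresee any substantive obstacle.
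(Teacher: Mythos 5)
Your proposal is correct and follows essentially the same route as the paper: both apply the Frobenius permutation $x\mapsto x^{p^j}$ to the $c$-differential equation, use that $c\in\mathbb{F}_p$ is fixed by Frobenius so the operator commutes with it, and conclude via the bijection $b\mapsto b^{p^j}$ on right-hand sides. Your explicit mention of the converse direction (raising to the $p^{n-j}$-th power) is a small point the paper leaves implicit by invoking that $x^{p^j}$ is a permutation, but the argument is the same.
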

\begin{proof}
For $a, b \in \mathbb{F}_{p^n}$, we have
\begin{equation*}
\begin{split}
 (x+a)^d-cx^d = b & \iff x^{p^j} \circ \left((x+a)^d-cx^d\right) = x^{p^j}(b)\\
 &\iff (x+a)^{dp^j} - cx^{dp^j} = e, ~ \mbox{where}~x^{p^j}(b)=e \in  \mathbb{F}_{p^n}.
 \end{split}
\end{equation*}
Since $x^{p^j}$ is a permutation, if $b$ runs over $\mathbb{F}_{p^n}$ then so does $e$. This completes the proof.
\end{proof}

\begin{prop}
\label{inv}
Let $c=\pm1$ and $\mbox{gcd}~(d, p^n-1)=1$, then the $c$-differential uniformity of the power functions $x^d$ and $x^{d^{-1}}$ over $\mathbb{F}_{p^n}$ is the same, where $d^{-1}$ is the inverse of $d$ modulo $p^n-1$.
\end{prop}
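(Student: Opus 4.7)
The plan is to exhibit, for any $(a,b) \in \mathbb{F}_{p^n}\times\mathbb{F}_{p^n}$, an explicit bijection between the solution sets
\[
\{x \in \mathbb{F}_{p^n} : (x+a)^d - c\,x^d = b\} \quad \text{and} \quad \{z \in \mathbb{F}_{p^n} : (z+b)^{d^{-1}} - c\,z^{d^{-1}} = a\}.
\]
Such a bijection immediately yields ${_c}\Delta_F(a,b) = {_c}\Delta_G(b,a)$ for $F(x)=x^d$ and $G(x)=x^{d^{-1}}$, and then taking maxima (over the index sets prescribed by Definition~\ref{defCDU}) will give ${_c}\Delta_F = {_c}\Delta_G$.

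The key substitution is $y = F(x) = x^d$. Since $\gcd(d,p^n-1)=1$, the map $F$ permutes $\mathbb{F}_{p^n}$ with inverse $G$, so $x = G(y)$, and the defining equation $F(x+a) - cF(x) = b$ becomes $F(G(y)+a) = cy + b$, i.e., $G(y)+a = G(cy+b)$, or equivalently
\[
G(cy + b) - G(y) = a.
\]
For $c = 1$ this is exactly the standard differential equation for $G$ evaluated at the pair $(b,a)$, so the bijection between the two solution sets is given directly by $x \mapsto y = F(x)$.

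For $c = -1$ (in odd characteristic; in even characteristic $-1=1$ and the claim reduces to the previous case), the transformed equation reads $G(b-y) - G(y) = a$. Here I would invoke the parity observation that $d^{-1}$ is odd: indeed $p^n-1$ is even and $\gcd(d^{-1},p^n-1)=1$, so $d^{-1}$ must be odd, whence $G(-u) = (-u)^{d^{-1}} = -u^{d^{-1}} = -G(u)$. The substitution $z = -y$ then converts the equation into $G(z+b) + G(z) = a$, which is precisely the $c=-1$ differential equation for $G$ at $(b,a)$. Passing from ${_c}\Delta_F(a,b) = {_c}\Delta_G(b,a)$ to equality of the global uniformities is routine in either case: for $c=-1$ relabeling $(a,b)\leftrightarrow(b,a)$ already does the job since the max runs over all of $\mathbb{F}_{p^n}^2$, and for $c=1$ one checks that the excluded slice $a=0$ only contributes the degenerate values $\{0,p^n\}$ to ${_c}\Delta_G$ (using that $G$ is a permutation), so removing it does not change the maximum. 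I anticipate no serious obstacle; the single subtle point is the oddness of $d^{-1}$ needed in the $c=-1$ step.
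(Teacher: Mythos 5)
Your argument is correct and is essentially the paper's own proof: both use $\gcd(d,p^n-1)=1$ to invert the power map and rewrite $(x+a)^d-cx^d=b$ as $(y+b)^{d^{-1}}-cy^{d^{-1}}=a$, so the solution counts at $(a,b)$ for $x^d$ and at $(b,a)$ for $x^{d^{-1}}$ coincide; the paper's single substitution $y=cx^d$ just packages your two steps ($y=x^d$, then $z=-y$ via the oddness of $d^{-1}$) into one, resting on the same fact $c^{-d^{-1}}=c$ for $c=\pm1$. Your explicit treatment of the excluded slice $a=0$ when $c=1$ and of even characteristic are minor details the paper leaves implicit.
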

\begin{proof}
For any $a,b\in \mathbb{F}_{p^n}$, we have
\begin{equation*}
\begin{split}
 (x+a)^d-cx^d = b &\iff (x+a)^d = (cx^d+b)\\
& \iff x+a = (cx^d+b)^{d^{-1}}\\
& \iff a = (cx^d+b)^{d^{-1}}-x  \\
& \iff a = (y+b)^{d^{-1}} - \frac{y^{d^{-1}}}{c^{d^{-1}}},~\mbox{where}~ y=cx^d  \\
& \iff a = (y+b)^{d^{-1}}-cy^{d^{-1}}  \\
 \end{split}
\end{equation*}
Therefore, for $c=\pm1$, the $c$-differential uniformity of $x^d$ and $x^{d^{-1}}$ over $\mathbb{F}_{p^n}$ is the same.
\end{proof}

\begin{prop}
 Let $p$ be an odd prime and $d'=p^4+(p-2)p^2+(p-1)p+1$. Then  for $c=-1$, the map $x^{d'}$ is PcN over $\mathbb{F}_{p^5}$. 
\end{prop}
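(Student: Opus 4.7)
The plan is to recognise $d'$ as the multiplicative inverse of $\frac{p^2+1}{2}$ modulo $p^5-1$, and then import the known perfect $(-1)$-nonlinearity of $x^{(p^2+1)/2}$ (established in Theorem~\ref{mainp}) to $x^{d'}$ via Proposition~\ref{inv}. This bypasses having to analyze $(x+1)^{d'}+(x-1)^{d'}$ directly, since Proposition~\ref{inv} preserves $c$-differential uniformity under exponent inversion when $c=\pm 1$.

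The first step is purely algebraic. I would simplify
\[
d'=p^4+(p-2)p^2+(p-1)p+1=p^4+p^3-p^2-p+1
\]
and compute the product
\[
d'(p^2+1)=(p^4+p^3-p^2-p+1)(p^2+1)=p^6+p^5-p+1.
\]
A direct verification gives $p^6+p^5-p+1-2=(p+1)(p^5-1)$, so $d'(p^2+1)\equiv 2\pmod{p^5-1}$. Since $p$ is odd, $(p+1)/2\in\mathbb{Z}$, and dividing the identity above by $2$ yields
\[
d'\cdot\frac{p^2+1}{2}=1+\frac{p+1}{2}\,(p^5-1)\equiv 1\pmod{p^5-1},
\]
so $d'$ is the inverse of $\tfrac{p^2+1}{2}$ modulo $p^5-1$.

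Next, I would apply Theorem~\ref{mainp} with $\ell=2$ (even) and $n=5$ (odd): condition (2) is satisfied, so $x^{(p^2+1)/2}$ is PcN over $\mathbb{F}_{p^5}$ for $c=-1$. In particular $x^{(p^2+1)/2}$ is a permutation, i.e.\ $\gcd\bigl(\tfrac{p^2+1}{2},\,p^5-1\bigr)=1$, so the hypotheses of Proposition~\ref{inv} hold with $c=-1$. That proposition then gives ${}_{-1}\Delta_{x^{d'}}={}_{-1}\Delta_{x^{(p^2+1)/2}}=1$, so $x^{d'}$ is PcN on $\mathbb{F}_{p^5}$.

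The only non-routine step is the modular identity $d'\equiv\bigl(\tfrac{p^2+1}{2}\bigr)^{-1}\pmod{p^5-1}$, which reduces to the polynomial factorisation $p^6+p^5-p-1=(p+1)(p^5-1)$ and the observation that $(p+1)/2$ is an integer; there is no substantial obstacle.
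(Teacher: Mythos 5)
Your proof is correct and follows essentially the same route as the paper: invoke Theorem~\ref{mainp} (with $\ell=2$, $n=5$) to get that $x^{(p^2+1)/2}$ is PcN for $c=-1$, note that $d'$ is the inverse of $\frac{p^2+1}{2}$ modulo $p^5-1$, and transfer perfect $c$-nonlinearity via Proposition~\ref{inv}. The only difference is that you verify the inverse identity $d'\cdot\frac{p^2+1}{2}\equiv 1\pmod{p^5-1}$ explicitly (via $d'(p^2+1)-2=(p+1)(p^5-1)$), which the paper merely asserts.
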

\begin{proof}
From Theorem~\ref{mainp}, we know that for $c=-1$, $\displaystyle x^{\frac{p^2+1}{2}}$  is PcN over $\mathbb{F}_{p^5}$. Now since $\displaystyle \gcd~\left(\frac{p^2+1}{2},p^5-1\right)=1$, its multiplicative inverse modulo $p^5-1$ exists and is equal to $p^4+(p-2)p^2+(p-1)p+1$. Therefore, by Proposition~\ref{inv}, $x^{d'}$ is a PcN over $\mathbb{F}_{p^5}$ for $c=-1$.
\end{proof}

In view of {Proposition~\ref {equv}}, Proposition~\ref{inv} and Theorem~\ref{mainp}, the following proposition immediately follows from the fact, stated in \cite{ZM},  that over $\mathbb{F}_{p^n}^*$ with $n$ odd, $\displaystyle p\left(\frac{p^n+1}{p+1}\right)$ is the inverse of $\displaystyle \frac{p^{n-1}+1}{2}$.

\begin{prop}
 Let $p$ be an odd prime and $\displaystyle d={\frac{p^5+1}{p+1}}$. Then  for $c=-1$, $x^d$ is PcN over $\mathbb{F}_{p^5}$. 
\end{prop}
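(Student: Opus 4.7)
The proof should essentially unpack the hint given just before the statement. The plan is to start from the fact (already established in Theorem~\ref{mainp}) that $x^{(p^4+1)/2}$ is PcN over $\mathbb{F}_{p^5}$. Here condition (2) of that theorem applies, with $\ell=4$ even and $n=5$ odd; moreover, condition (2) of Theorem~\ref{l2} simultaneously gives $\gcd\!\left(\frac{p^4+1}{2},p^5-1\right)=1$, so the exponent has a multiplicative inverse modulo $p^5-1$ and Proposition~\ref{inv} is applicable.

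Next, I would identify that inverse explicitly. Using the factorization $p^5+1=(p+1)(p^4-p^3+p^2-p+1)$, write $p\cdot\frac{p^5+1}{p+1}=p^5-p^4+p^3-p^2+p$. A direct computation of the product
\[
\frac{p^4+1}{2}\cdot p\cdot\frac{p^5+1}{p+1}
\]
followed by the reductions $p^5\equiv 1$, $p^6\equiv p$, $p^7\equiv p^2$, $p^8\equiv p^3$ modulo $p^5-1$, collapses the expression to $1$. This confirms the cited fact from \cite{ZM}: the inverse of $\frac{p^4+1}{2}$ modulo $p^5-1$ is $p\cdot\frac{p^5+1}{p+1}$. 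Applying Proposition~\ref{inv} with $c=-1$ then yields that $x^{\,p\cdot (p^5+1)/(p+1)}$ is also PcN over $\mathbb{F}_{p^5}$.

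Finally, a single application of Proposition~\ref{equv} with $j=1$ removes the factor of $p$ from the exponent without changing the $(-1)$-differential uniformity, giving that $x^d$ with $d=\frac{p^5+1}{p+1}$ is PcN over $\mathbb{F}_{p^5}$, as required.

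There is no real obstacle here: the argument is a three-step chain, Theorem~\ref{mainp} $\to$ Proposition~\ref{inv} $\to$ Proposition~\ref{equv}, and the only non-trivial piece is the verification that $p\cdot\frac{p^5+1}{p+1}$ is indeed the modular inverse of $\frac{p^4+1}{2}$. That verification is a short arithmetic check modulo $p^5-1$, of exactly the kind one expects from the remark that the proposition "immediately follows" from the quoted fact. Thus the only thing worth writing out in the formal proof is this short inverse computation; everything else is a citation.
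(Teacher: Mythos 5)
Your proposal is correct and follows exactly the paper's route: the paper treats the proposition as an immediate consequence of Theorem~\ref{mainp} (which gives that $x^{(p^4+1)/2}$ is PcN), the fact quoted from \cite{ZM} that $p\,\frac{p^n+1}{p+1}$ is the inverse of $\frac{p^{n-1}+1}{2}$ modulo $p^n-1$ for $n$ odd, Proposition~\ref{inv}, and Proposition~\ref{equv}, and offers no further argument. You simply write out the modular-inverse verification that the paper delegates to the citation (one small point of care there: since the modulus $p^5-1$ is even, the cleanest justification of the division by $2$ is the factorization $\frac{p^4+1}{2}\cdot p\cdot\frac{p^5+1}{p+1}-1=(p^5-1)\cdot\frac{p^4-p^3+p^2-p+2}{2}$, whose second factor is an integer because $p$ is odd), so the two arguments coincide.
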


As an empirical support for these results, and in search of more PcN power functions for $c=-1$, we performed an exhaustive search of all possible exponents $d$ for which $x^d$ is PcN for $c=-1$ over the finite fields $\mathbb{F}_{3^5}$, $\mathbb{F}_{5^5}$, and $\mathbb{F}_{7^5}$, respectively. The result of this search was that $d$ is of the form 
\begin{equation*}
 p^j \bigg \{ 1, \frac{p^2+1}{2} , p^4+(p-2)p^2+(p-1)p+1,  \frac{p^4+1}{2} , \frac{p^5+1}{p+1} \bigg\}
\end{equation*}
for all $ 0 \leq j \leq 4$, for $p=3,\,5,\,7$, respectively.
Based on this empirical evidence, we propose the following conjecture.
\begin{conj} \label{C1}
 {Let $p$ be an odd prime.
 Then, for $c=-1$, and for all $ 0 \leq j \leq 4$,}
\begin{equation*}
 p^j \bigg \{ 1, \frac{p^2+1}{2} , p^4+(p-2)p^2+(p-1)p+1,  \frac{p^4+1}{2} , \frac{p^5+1}{p+1} \bigg\}
\end{equation*}
are the only values of $d$ for which $x^d$ is PcN on $\mathbb{F}_{p^5}.$
\end{conj}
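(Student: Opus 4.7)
My plan is to combine the Frobenius and inversion equivalences from Propositions~\ref{equv} and~\ref{inv} with the Dickson-polynomial characterization of Theorem~\ref{maint}, and to attack what remains with a Hermite-type permutation-polynomial obstruction.

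\textbf{Step 1 (reduction via equivalences).} By Proposition~\ref{equv}, the $(-1)$-differential uniformity of $x^d$ depends only on the Frobenius orbit $\{dp^j \bmod (p^5-1)\}_{j=0}^{4}$, and by Proposition~\ref{inv} it is preserved under $d \mapsto d^{-1} \bmod (p^5-1)$ whenever $\gcd(d,p^5-1)=1$. Under these identifications the five listed exponents collapse to three essential classes: $1$, $\tfrac{p^2+1}{2}$ and $\tfrac{p^4+1}{2}$ (the exponents $p^4+(p-2)p^2+(p-1)p+1$ and $\tfrac{p^5+1}{p+1}$ being the inverses of $\tfrac{p^2+1}{2}$ and $\tfrac{p^4+1}{2}$ up to a Frobenius twist). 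It therefore suffices to show that every $d$ with $\gcd(d,p^5-1)=1$ whose equivalence class is not among these three fails to be PcN.

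\textbf{Step 2 (leveraging the Dickson bridge).} By Lemma~\ref{equiv-def}, $x^d$ is PcN over $\mathbb{F}_{p^5}$ iff $x^d$ is a permutation (so $\gcd(d,p^5-1)=1$) \emph{and} $G_d(x):=(x+1)^d+(x-1)^d$ permutes $\mathbb{F}_{p^5}$. Theorem~\ref{maint} gives a complete list of those $d$ for which $G_d(x)=2D_d(x,\epsilon)$ for some $\epsilon\in\mathbb{F}_p^*$; combined with Lemma~\ref{DPP} and Theorem~\ref{mainp}, this produces precisely the listed classes in the $n=5$ case (the identity, $\tfrac{p^2+1}{2}$, and $\tfrac{p^4+1}{2}$). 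Consequently, the sharp content of the conjecture is the statement: for every $d$ outside these equivalence classes, $G_d$ is \emph{not} a permutation of $\mathbb{F}_{p^5}$.

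\textbf{Step 3 (obstruction via power sums).} I would attack this by writing $d = a_0 + a_1 p + a_2 p^2 + a_3 p^3 + a_4 p^4$ in base $p$, expanding $G_d$ via the binomial theorem, and invoking Lucas' congruence to obtain explicit multinomial formulas for the coefficients of $G_d$ reduced modulo $x^{p^5}-x$. A Hermite-type criterion requires that for each $1 \le t \le p^5-2$ the coefficient of $x^{p^5-1}$ in $G_d(x)^t \bmod (x^{p^5}-x)$ vanish. Evaluating these identities for small values of $t$ (e.g.\ $t=2$, or $t$ of the form $(p^5-1)/(p^j+1)$) should yield a restrictive Diophantine system in $(a_0,\ldots,a_4)$ whose only solutions, modulo the Frobenius action, are the listed digit patterns.

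\textbf{The main obstacle} is uniformity in $p$: for each fixed prime the classification is a finite computation (indeed, $p=3,5,7$ was already carried out in Section~\ref{S4}), but producing a single argument valid for all odd $p$ is delicate, since the number of candidate digit tuples grows like $p^5$ whereas each individual power-sum identity only eliminates a bounded-depth family. A conceptually cleaner route, which would close the conjecture at once, would be to prove a \emph{converse} to Theorem~\ref{maint} in the permutation setting, namely: if $G_d(x)$ permutes $\mathbb{F}_{p^5}$ then, up to Frobenius and inversion, $G_d(x)=2D_d(x,\epsilon)$ for some $\epsilon\in\mathbb{F}_p^*$. Establishing this converse is the heart of the difficulty, since a~priori there is no structural reason precluding an ``exotic'' non-Dickson polynomial of the shape $(x+1)^d+(x-1)^d$ from accidentally permuting $\mathbb{F}_{p^5}$.
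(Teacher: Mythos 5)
The statement you are addressing is Conjecture~\ref{C1}: the paper does not prove it, and offers only (a) proofs that the listed exponents \emph{are} PcN (via Theorem~\ref{mainp}, Proposition~\ref{equv} and Proposition~\ref{inv}) and (b) an exhaustive computer search over $\mathbb{F}_{3^5}$, $\mathbb{F}_{5^5}$, $\mathbb{F}_{7^5}$ supporting the ``only'' direction. Your proposal does not prove it either, and the gap is exactly where you say it is, but it is worth being precise about why your Steps 1--2 do not reduce the problem as cleanly as claimed. Theorem~\ref{maint} classifies the $d$ for which the identity $(x+1)^d+(x-1)^d=2D_d(x,\epsilon)$ holds; it says nothing about $d$ outside that list, and for such $d$ the polynomial $G_d$ can perfectly well be a permutation without being (twice) a Dickson polynomial. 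Indeed the conjecture's own exponents $p^4+(p-2)p^2+(p-1)p+1$ and $\frac{p^5+1}{p+1}$ are of this kind: they are PcN because $d\mapsto d^{-1}$ and $d\mapsto pd$ preserve PcN-ness (Propositions~\ref{inv} and~\ref{equv}), not because $G_d$ satisfies the Dickson identity. So the ``converse up to Frobenius and inversion'' you invoke at the end is not a sharpening of Theorem~\ref{maint} but a genuinely new statement, and formulating it correctly already requires knowing how the Dickson property interacts with inversion of the exponent, which is not addressed.

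Step 3 is only a strategy, and as you candidly note it does not close: a Hermite-type criterion applied to $G_d$ for a bounded set of exponents $t$ gives finitely many polynomial conditions on the base-$p$ digits $(a_0,\ldots,a_4)$, but there is no argument that these conditions eliminate all of the roughly $p^5$ candidate digit tuples uniformly in $p$; for each fixed $p$ this is a finite computation (which is precisely what Section~\ref{S4} reports for $p=3,5,7$), and the uniform statement is the open problem. The paper's own remark following Conjecture~\ref{C2} (the exponent $d=29$ over $\mathbb{F}_{3^9}$ breaking the analogous pattern for $n=9$) is a warning that any purely digit-pattern argument must be sensitive to $n=5$ specifically, which your outline does not explain. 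In short: your Steps 1--2 essentially reproduce the sufficiency arguments already in the paper, and the necessity half --- the entire content of the conjecture --- remains unproved in your proposal just as it does in the paper.
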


\section{P$c$N power functions over $\mathbb{F}_{p^7}$ with $c=-1$} \label{S5}

\begin{prop}
Let $p$ be an odd prime and $d_1=(p-1)p^6+p^5+(p-2)p^3+(p-1)p^2+p$. Then for $c=-1$, the map $x^{d_1}$ is a PcN map over $\mathbb{F}_{p^7}$. 
\end{prop}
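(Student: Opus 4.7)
The plan is to mirror the strategy used for the analogous propositions in Section~\ref{S4}: identify $d_1$ as the multiplicative inverse modulo $p^7-1$ of an exponent for which $x^{(\cdot)}$ is already known to be PcN over $\mathbb{F}_{p^7}$ for $c=-1$, and then invoke Proposition~\ref{inv}. The natural candidate is $\frac{p^2+1}{2}$: since $n=7$ is odd and $\ell=2$ is even, condition~(2) of Theorem~\ref{mainp} immediately yields that $x^{(p^2+1)/2}$ is PcN over $\mathbb{F}_{p^7}$ for $c=-1$.

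First I would check that $\frac{p^2+1}{2}$ is a unit modulo $p^7-1$, so that its inverse exists. By Lemma~\ref{gcd}(1), $\gcd(p^2+1,\,p^7-1)=2$, since $7/\gcd(2,7)=7$ is odd. Because $\frac{p^2+1}{2}$ is odd (for odd $p$ one has $p^2\equiv 1\pmod 8$, so $(p^2+1)/2\equiv 1\pmod 4$), dividing by $2$ gives $\gcd\!\left(\frac{p^2+1}{2},\,p^7-1\right)=1$.

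Next I would verify the key congruence $d_1\cdot \frac{p^2+1}{2}\equiv 1\pmod{p^7-1}$. Reading off the base-$p$ digits of $d_1$ and distributing the multiplication by $p^2+1$ across its five nonzero terms, one collects
\[
d_1\,(p^2+1)=p^9-p^8+2p^7-p^2+p.
\]
Reducing modulo $p^7-1$ via $p^{7+i}\equiv p^i$ transforms the right-hand side into $p^2-p+2-p^2+p=2$; dividing by $2$ yields $1$ as desired. Then Proposition~\ref{inv}, applied with $c=-1$ to the PcN map $x^{(p^2+1)/2}$, shows that $x^{d_1}$ is PcN over $\mathbb{F}_{p^7}$.

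The main obstacle is purely bookkeeping: organizing the expansion of $d_1(p^2+1)$ so that the cancellations are transparent. I would tabulate the five summands of $d_1$, multiply each by $p^2+1$ separately, then collect coefficients of each power of $p$ before performing the reduction modulo $p^7-1$. Everything else is a direct citation of Theorem~\ref{mainp}, Lemma~\ref{gcd}, and Proposition~\ref{inv}.
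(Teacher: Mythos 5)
Your proposal is correct and takes essentially the same route as the paper: invoke Theorem~\ref{mainp} (case $\ell=2$ even, $n=7$ odd) for $x^{(p^2+1)/2}$, check $\gcd\bigl(\tfrac{p^2+1}{2},p^7-1\bigr)=1$, and conclude via Proposition~\ref{inv}; the only difference is that you explicitly verify the inverse identity that the paper merely asserts. One small caution: since $p^7-1$ is even, you cannot literally ``divide by $2$'' the congruence $d_1(p^2+1)\equiv 2\pmod{p^7-1}$, but the exact identity $d_1(p^2+1)=(p^2-p+2)(p^7-1)+2$ with $p^2-p+2$ even gives $d_1\cdot\tfrac{p^2+1}{2}\equiv 1\pmod{p^7-1}$, so your conclusion stands.
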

\begin{proof}
From Theorem~\ref{mainp}, we know that for $c=-1$, $\displaystyle x^{\frac{p^2+1}{2}}$  is PcN map over $\mathbb{F}_{p^7}$. Now since $\displaystyle \gcd~\left(\frac{p^2+1}{2},p^7-1\right)=1$, its multiplicative inverse modulo $p^7-1$ exists and is equal to $(p-1)p^6+p^5+(p-2)p^3+(p-1)p^2+p$. Thus, by Proposition~\ref{inv}, the map $x^{d_1}$ is PcN function over $\mathbb{F}_{p^7}$ for $c=-1$.
\end{proof}

\begin{prop}
Let $p$ be an odd prime and $d_2=(p-2)p^6+(p-2)p^5+(p-1)p^4+p^3+p^2+p$. Then for $c=-1$, the map $x^{d_2}$ is a PcN function over $\mathbb{F}_{p^7}$. 
\end{prop}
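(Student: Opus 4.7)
The plan is to mirror the proof of the preceding proposition. Theorem~\ref{mainp} furnishes a family of PcN maps of the form $x^{(p^\ell+1)/2}$ over $\mathbb{F}_{p^7}$ (for $c=-1$) whenever $\ell$ is even, since $n=7$ is odd. The previous proposition exploited $\ell=2$; for $d_2$ the natural candidate is $\ell=4$, i.e., the map $\displaystyle x^{(p^4+1)/2}$. First, I would note that since $n=7$ is odd, Lemma~\ref{gcd} gives $\gcd(p^4+1,p^7-1)=2$, hence $\gcd((p^4+1)/2,\,p^7-1)=1$, so the exponent $(p^4+1)/2$ is invertible modulo $p^7-1$, and Theorem~\ref{mainp} confirms that $x^{(p^4+1)/2}$ is PcN over $\mathbb{F}_{p^7}$ for $c=-1$.

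The heart of the proof is to verify that $d_2$ is precisely the multiplicative inverse of $(p^4+1)/2$ modulo $p^7-1$. Equivalently, one must show $d_2(p^4+1)\equiv 2\pmod{p^7-1}$. I would carry this out in base $p$: writing $d_2=0+p+p^2+p^3+(p-1)p^4+(p-2)p^5+(p-2)p^6$, so that its base-$p$ digits (from position $0$ to $6$) are $(0,1,1,1,p-1,p-2,p-2)$, and noting that multiplication by $p^4$ modulo $p^7-1$ corresponds to a cyclic shift of these digits by four positions, yielding the digit tuple $(1,p-1,p-2,p-2,0,1,1)$ for $d_2\cdot p^4\bmod(p^7-1)$. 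Adding the two digit strings termwise gives $(1,p,p-1,p-1,p-1,p-1,p-1)$, and propagating carries position by position collapses everything to a single $p^7\equiv 1$, producing the final value $2$. Hence $d_2(p^4+1)/2\equiv 1\pmod{p^7-1}$.

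With the inverse identified, Proposition~\ref{inv} immediately transfers the PcN property from $x^{(p^4+1)/2}$ to $x^{d_2}$, concluding the argument. The only nontrivial step is the base-$p$ bookkeeping in the carry-propagation, but this is a purely mechanical check, so I do not anticipate any genuine obstacle; the main subtlety is simply selecting the correct value of $\ell$ (here $\ell=4$ rather than $\ell=2$) so that the inverse has the prescribed digit pattern.
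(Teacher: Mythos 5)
Your overall route is exactly the paper's: invoke Theorem~\ref{mainp} with $\ell=4$ (even) and $n=7$ (odd) to get that $x^{(p^4+1)/2}$ is PcN for $c=-1$, check that $\gcd\bigl(\tfrac{p^4+1}{2},p^7-1\bigr)=1$, identify $d_2$ as the inverse of $\tfrac{p^4+1}{2}$ modulo $p^7-1$, and transfer PcN-ness via Proposition~\ref{inv}; the paper simply asserts the value of the inverse, so your digit/carry verification is a welcome addition. That computation correctly yields $d_2(p^4+1)\equiv 2\pmod{p^7-1}$. However, your final inference ``hence $d_2\cdot\tfrac{p^4+1}{2}\equiv 1\pmod{p^7-1}$'' does not follow as stated: since $p$ is odd, $p^7-1$ is even, so $2$ is a zero divisor modulo $p^7-1$ and cannot be cancelled. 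Indeed, the congruence $x(p^4+1)\equiv 2\pmod{p^7-1}$ has exactly two solutions, differing by $\tfrac{p^7-1}{2}$, and (because $\tfrac{p^4+1}{2}$ is odd) only one of them is the true inverse of $\tfrac{p^4+1}{2}$; so knowing $d_2$ solves it is not yet enough.

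The gap is easily closed. Writing $d_2=p^7-p^6-p^5-p^4+p^3+p^2+p$, one checks the exact identity $d_2(p^4+1)-2=(p^4-p^3-p^2-p+2)(p^7-1)$, and the cofactor $p^4-p^3-p^2-p+2$ is even (each $p^i$ is odd). Halving gives $d_2\cdot\tfrac{p^4+1}{2}=1+\tfrac{p^4-p^3-p^2-p+2}{2}(p^7-1)\equiv 1\pmod{p^7-1}$, which is the statement you actually need before applying Proposition~\ref{inv}. Equivalently, you could run your base-$p$ bookkeeping directly on the product $d_2\cdot\tfrac{p^4+1}{2}$ rather than on $d_2(p^4+1)$. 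With that one repair, your argument is complete and coincides with the paper's proof.
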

\begin{proof}
From Theorem~\ref{mainp}, we know that for $c=-1$, the power function $\displaystyle x^{\frac{p^4+1}{2}}$  is PcN over $\mathbb{F}_{p^7}$. Now since $\displaystyle \gcd~\left(\frac{p^4+1}{2},p^7-1\right)=1$, its multiplicative inverse modulo $p^7-1$ exists and is equal to $(p-2)p^6+(p-2)p^5+(p-1)p^4+p^3+p^2+p$. Therefore. by Proposition~\ref{inv}, the map $x^{d_2}$ is PcN function over $\mathbb{F}_{p^7}$ for $c=-1$.
\end{proof}

In view of {Proposition~\ref {equv}}, Proposition~\ref{inv} and Theorem~\ref{mainp},  the following proposition is a direct consequence of the fact that over $\displaystyle \mathbb{F}_{p^n}^*$ with $n$ odd, $\displaystyle p\left(\frac{p^n+1}{p+1}\right)$ is the inverse of $\displaystyle \frac{p^{n-1}+1}{2}$.

\begin{prop}
\label{Z2}
 Let $p$ be an odd prime and $d_3=  {\frac{p^7+1}{p+1}}$. Then for $c=-1$, the power function $x^{d_3}$ is PcN over $\mathbb{F}_{p^7}$. 
\end{prop}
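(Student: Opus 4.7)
The plan is to mirror the proof strategy already used for $d_1$ and $d_2$ in this section, chaining Theorem~\ref{mainp}, Proposition~\ref{inv}, and Proposition~\ref{equv} together, with the cited arithmetic fact about inverses bridging the middle step.

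First, I would apply Theorem~\ref{mainp} with $\ell=6$ and $n=7$. Here $\ell=6$ is even and $n=7$ is odd, so condition $(2)$ of the theorem holds, yielding that $x^{(p^6+1)/2}$ is PcN over $\mathbb{F}_{p^7}$ for $c=-1$. Since $n/\gcd(\ell,n)=7$ is odd, Lemma~\ref{gcd}$(1)$ gives $\gcd(p^6+1,p^7-1)=2$, and hence $\gcd\bigl(\tfrac{p^6+1}{2},p^7-1\bigr)=1$. Consequently $\tfrac{p^6+1}{2}$ is invertible modulo $p^7-1$, and Proposition~\ref{inv} can be applied to this exponent.

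Next, I would invoke the stated arithmetic fact: for $n=7$, the inverse of $\tfrac{p^{n-1}+1}{2}=\tfrac{p^6+1}{2}$ modulo $p^n-1=p^7-1$ equals $p\cdot\tfrac{p^7+1}{p+1}=p\,d_3$. (One can verify this quickly by observing that $\tfrac{p^7+1}{p+1}=p^6-p^5+p^4-p^3+p^2-p+1$, multiplying by $2p$ against $p^6+1$, and reducing using $p^7\equiv 1\pmod{p^7-1}$.) Proposition~\ref{inv} then promotes the PcN property of $x^{(p^6+1)/2}$ to the PcN property of $x^{p\,d_3}$ over $\mathbb{F}_{p^7}$ for $c=-1$.

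Finally, I would apply Proposition~\ref{equv} with $d=d_3$ and $j=1$: the power maps $x^{d_3}$ and $x^{p\,d_3}$ share the same $c$-differential uniformity, so $x^{d_3}$ is PcN. There is no real obstacle here; the only point needing care is confirming that $p\,d_3$ is genuinely the inverse of $\tfrac{p^6+1}{2}$ modulo $p^7-1$ (as opposed to $1+\tfrac{p^7-1}{2}$), which the cited fact from \cite{ZM} guarantees and which can be double-checked by the direct computation sketched above.
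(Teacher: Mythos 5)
Your argument is correct and is essentially the paper's own proof: the paper derives Proposition~\ref{Z2} exactly by combining Theorem~\ref{mainp} (applied to $x^{(p^6+1)/2}$), the fact from \cite{ZM} that $p\frac{p^7+1}{p+1}$ is the inverse of $\frac{p^6+1}{2}$ modulo $p^7-1$, Proposition~\ref{inv}, and Proposition~\ref{equv} to strip off the factor $p$. Your extra verification of the inverse identity and of the gcd condition is a careful filling-in of details the paper leaves implicit, not a different route.
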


As an empirical support for these results, and in search of more PcN power functions for $c=-1$, we performed an exhaustive search of all possible exponents $d$ for which $x^d$ is PcN for $c=-1$ over the finite fields $\mathbb{F}_{3^7}$, $\mathbb{F}_{5^7}$, and $\mathbb{F}_{7^7}$, respectively. The result of this search was that $d$ is of the form \begin{equation*}
\begin{split}
& p^j \bigg \{1, \frac{p^2+1}{2}, ((p-1)p^6+p^5+(p-2)p^3+(p-1)p^2+p),   \frac{p^4+1}{2}, \\
&\qquad \frac{p^6+1}{2}, (p-2)p^6+(p-2)p^5+(p-1)p^4+p^3+p^2+p, 
 \frac{p^7+1}{p+1}  \bigg \} 
\end{split} 
\end{equation*} 
for all $0 \leq j \leq 6$, for $p=3,\,5,\,7$, respectively.

\begin{conj} \label{C2}
{Let $p$ be an odd prime. 
Then for $c=-1$ and for all $0 \leq j \leq 6$,}
\begin{equation*}
\begin{split}
& p^j \bigg \{1, \frac{p^2+1}{2}, ((p-1)p^6+p^5+(p-2)p^3+(p-1)p^2+p),   \frac{p^4+1}{2}, \\
&\qquad \frac{p^6+1}{2}, (p-2)p^6+(p-2)p^5+(p-1)p^4+p^3+p^2+p, 
 \frac{p^7+1}{p+1}  \bigg \} 
\end{split} 
\end{equation*} 
are the only values of $d$ for which $x^d$ is PcN over $\mathbb{F}_{p^7}.$
\end{conj}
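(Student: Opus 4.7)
The plan is to prove both directions of the conjecture. By Proposition~\ref{equv} and Proposition~\ref{inv}, the $(-1)$-differential uniformity of $x^d$ is invariant under $d \mapsto p^j d \pmod{p^7-1}$ and, when $\gcd(d,p^7-1)=1$, under $d \mapsto d^{-1} \pmod{p^7-1}$. So one should partition the set $\{d : 1\le d\le p^7-2,\ \gcd(d,p^7-1)=1\}$ into equivalence classes under these two operations and check each. We restrict to $\gcd(d,p^7-1)=1$, since otherwise $x^d$ fails to be a permutation and cannot be PcN.

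The forward direction --- that every listed exponent is PcN --- is essentially already in place from the propositions above: $d=(p^2+1)/2$, $(p^4+1)/2$, $(p^6+1)/2$ are covered by Theorem~\ref{mainp} (take $\ell=2,4,6$ with $n=7$ odd); $d=(p^7+1)/(p+1)$ is covered by Proposition~\ref{Z2}; and the two Horner-expanded exponents are the multiplicative inverses modulo $p^7-1$ of $(p^2+1)/2$ and $(p^4+1)/2$, to which Proposition~\ref{inv} applies. The substance of the conjecture is therefore the converse: no $d$ outside these Frobenius-plus-inverse orbits is PcN.

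To attack the converse, I would invoke Lemma~\ref{equiv-def} and try to show that $G_d(x) := (x+1)^d+(x-1)^d$ fails to permute $\mathbb{F}_{p^7}$ for every candidate $d$ not in the list. By Theorem~\ref{maint}, for $d$ outside the narrow family described there, $G_d$ is \emph{not} a scalar multiple of any Dickson polynomial of the first kind, so Lemma~\ref{DPP} cannot be applied directly and one must exhibit explicit collisions. The plan is to expand $G_d(x)=2\sum_{i\text{ even}}\binom{d}{i}x^{d-i}$, identify which coefficients survive modulo $p$ via Lucas' theorem on the base-$p$ digits $d=a_0+a_1 p+\cdots+a_6 p^6$, group the surviving monomials into Frobenius-compatible clusters, and within each cluster seek either a subfield collision (only $\mathbb{F}_p$ is available, since $7$ is prime) or a collision along a cyclotomic coset, using Hilbert's Theorem $90$ to locate trace-zero witnesses when needed.

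The main obstacle is uniformity in $p$. There are on the order of $p^7$ admissible digit patterns, and the splitting of $p^7-1=(p-1)\Phi_7(p)$ varies with $p$, so the set of allowed $d$ is itself $p$-dependent. A fully structural proof seems to require, for every ``bad'' digit pattern, a tailored algebraic identity forcing a collision, which I do not see how to produce without unwieldy case analysis. A more realistic intermediate goal is an asymptotic statement: for $p$ sufficiently large, a Weil-type bound applied to the affine curve $G_d(x)-G_d(y)=0$ with $x\neq y$, combined with a genus estimate controlled by the base-$p$ complexity of $d$, should force collisions whenever $d$ is not on the conjectured list. Coupled with a direct computer verification for small primes extending the $p=3,5,7$ data already collected, this would settle the conjecture.
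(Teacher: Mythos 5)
The statement you are trying to prove is stated in the paper as a \emph{conjecture}: the paper itself offers no proof, only the forward inclusion (via Theorem~\ref{mainp}, Propositions~\ref{equv}, \ref{inv} and \ref{Z2}) together with an exhaustive computer search over $\mathbb{F}_{3^7}$, $\mathbb{F}_{5^7}$, $\mathbb{F}_{7^7}$. Your treatment of the forward direction is correct and is essentially identical to the paper's: the exponents $\frac{p^2+1}{2}$, $\frac{p^4+1}{2}$, $\frac{p^6+1}{2}$ come from Theorem~\ref{mainp} with $n=7$ odd, $\frac{p^7+1}{p+1}$ from Proposition~\ref{Z2}, the two Horner-expanded exponents are the inverses modulo $p^7-1$ handled by Proposition~\ref{inv}, and the factor $p^j$ by Proposition~\ref{equv}. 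But that is not the conjecture; the content is the word ``only,'' and your proposal does not establish it.

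The concrete gap is in your plan for the converse. The Weil-type step cannot work as described: for $d$ not on the list you would apply a point-count bound to the curve $G_d(x)-G_d(y)=0$, where $G_d(x)=(x+1)^d+(x-1)^d$, but $d$ ranges up to about $p^7-2$, so $\deg G_d$ is comparable to $q=p^7$ and the genus of any absolutely irreducible component can be of order $q^2$; the Weil estimate $q+1-2g\sqrt{q}$ is then vacuous, and no ``genus estimate controlled by the base-$p$ complexity of $d$'' can rescue this, since the number of surviving monomials after Lucas' theorem is generically large. Moreover, even if an asymptotic statement held for $p$ large, the threshold would depend on degrees growing with $p$, so it could not be closed by a finite computation extending $p=3,5,7$; and you give no uniform-in-$p$ collision construction for the roughly $p^7/(14)$ orbits of admissible digit patterns (you acknowledge this yourself). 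Note also that the paper's own remark following the conjecture (the counterexample $d=29$ over $\mathbb{F}_{3^9}$ to the analogous pattern for $n=9$) shows that the exceptional exponents are delicate, so any proof of ``only'' must use $n=7$ specifically. As it stands, your proposal reproves what the paper already proves and leaves the conjectural part open.
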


\begin{rmk} 
The pattern in  \cite[Conjecture 5.3]{BT}, Conjecture~\textup{\ref{C1}} and Conjecture~\textup{\ref{C2}} appears to suggest that over a finite field $\mathbb{F}_{p^n}$, where $n$ is odd, the positive integers in the following set 
$$
\left\{ p^j \left\{1, \frac{p^2+1}{2},   \frac{p^4+1}{2}, \ldots, \frac{p^{n-1}+1}{2} \right\}\right\}_{ j=0,1,2,\ldots, r-1}
$$ 
and their multiplicative inverse modulo $(p^n-1)$ are the only possible exponents $d$ for which the power function $x^d$ is PcN for $c=-1$. However, this is not true in general and the smallest example is $d=29$ over the finite field~$\mathbb{F}_{3^9}$. Therefore, the question about the exponents $d$,  for which the power functions $x^d$ are PcN over finite field $\mathbb{F}_{p^n}$, where $n$ odd, is not clear, even conjecturally. 
\end{rmk}

\section{Perturbations of P$c$N and other functions}
\label{S6}

After linear functions and power functions, linearized polynomials are another special class containing permutation polynomials. The following proposition gives a necessary and sufficient condition for a linearized polynomial to be perfect $c$-nonlinear, similar to~\cite[Proposition 2.4]{CS97}.

\begin{prop}\label{LPP}
Let $c\neq 1$. A linearized polynomial $L$ is perfect $c$-nonlinear over $\mathbb{F}_{p^n}$ if and only if $L$ is a permutation polynomial if and only if its only root in $\mathbb{F}_{p^n}$ is zero.
\end{prop}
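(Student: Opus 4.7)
The plan is to exploit the additive structure of a linearized polynomial $L$, which satisfies $L(x+a)=L(x)+L(a)$ for all $x,a\in\mathbb{F}_{p^n}$. Under this identity, the $c$-derivative simplifies dramatically: for every $a\in\mathbb{F}_{p^n}$,
\[
{}_cD_a L(x) = L(x+a) - c\,L(x) = (1-c)\,L(x) + L(a).
\]
Since $L(a)$ is a fixed element of $\mathbb{F}_{p^n}$ once $a$ is chosen, this is just an additive shift of the map $x\mapsto (1-c)L(x)$.

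The first step is to translate the PcN condition using this identity. By Definition~\ref{defCDU}, $L$ is PcN if and only if for every $a\in\mathbb{F}_{p^n}$ the map $x\mapsto {}_cD_a L(x)$ is a bijection of $\mathbb{F}_{p^n}$. Because an additive shift does not affect bijectivity, this is equivalent to $x\mapsto (1-c)L(x)$ being a permutation, which, using the hypothesis $c\neq 1$ (so that $1-c\in\mathbb{F}_p^*$ is invertible), is in turn equivalent to $L$ itself being a permutation of $\mathbb{F}_{p^n}$. This handles the first equivalence, and it is clean enough that it explains why the condition does not depend on $a$: the $a$-dependence only contributes the harmless translation $L(a)$.

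For the second equivalence, the plan is to invoke the standard fact that a linearized polynomial is an $\mathbb{F}_p$-linear endomorphism of the finite-dimensional $\mathbb{F}_p$-vector space $\mathbb{F}_{p^n}$. For such an endomorphism on a finite set, injectivity, surjectivity, and bijectivity coincide, so $L$ being a permutation is equivalent to $\Ker L = \{0\}$, i.e., to the only root of $L$ in $\mathbb{F}_{p^n}$ being~$0$.

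There is no real obstacle here: the argument is essentially bookkeeping once one notices that the additivity of $L$ collapses the $c$-derivative into an affine map in $L$. The only subtlety worth flagging is that the hypothesis $c\neq 1$ is used precisely to ensure $1-c\neq 0$ so that scaling by $1-c$ is a bijection; without it the statement fails, which is consistent with the convention stated after Definition~\ref{defCDU}.
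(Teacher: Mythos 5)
Your proof is correct and follows essentially the same route as the paper: both collapse ${}_cD_aL(x)$ to the affine map $(1-c)L(x)+L(a)$ via the additivity of $L$, use $c\neq 1$ to strip off the invertible factor $1-c$, and identify ``permutation'' with ``trivial kernel'' for an $\mathbb{F}_p$-linear map on $\mathbb{F}_{p^n}$. One small slip worth fixing: since $c$ ranges over $\mathbb{F}_{p^n}$, you should write $1-c\in\mathbb{F}_{p^n}^*$ rather than $1-c\in\mathbb{F}_p^*$, though the argument only needs $1-c\neq 0$.
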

\begin{proof}
 Recall that a linearized polynomial $L(x)$ over finite field $\mathbb{F}_{p^n}$ is a polynomial of the form $\sum_{i=0}^{n-1}a_ix^{p^i}$. Now consider the difference function
\begin{equation*}
 \begin{split}
   _cD_aL (x)&= L(x+a)-cL(x) \\
   &= \sum_{i=0}^{n-1}a_i(x+a)^{p^i} -c \cdot  \sum_{i=0}^{n-1}a_ix^{p^i}\\
   &= (1-c) \cdot \sum_{i=0}^{n-1}a_ix^{p^i} + \sum_{i=0}^{n-1}a_i a^{p^i}.
 \end{split}
\end{equation*}
Now, if the only root of $L(x)$ in $\mathbb{F}_{p^n}$ is zero, then $L(x)$ is a permutation polynomial. Now since $c\neq1$, the difference function $_cD_a$ being an affine linearized polynomial is also a permutation polynomial and hence $L(x)$ is perfect $c$-nonlinear.
\end{proof}

\begin{cor}
Let $c\neq 1$.  The binomial $F(x)=x^{p^j}-ax^{p^i}$, $0\leq i<j$, is a perfect c-nonlinear function over $\mathbb{F}_{p^n}$ if and only if $a$ is not a $(p^{j-i}-1)$-st power in $\mathbb{F}_{p^n}$ and $c\neq 1$. 
\end{cor}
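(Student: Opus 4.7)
The plan is to deduce this corollary directly from Proposition~\ref{LPP}, which reduces the question of perfect $c$-nonlinearity of the linearized polynomial $F(x)=x^{p^j}-ax^{p^i}$ to the question of whether its only root in $\mathbb{F}_{p^n}$ is zero (the hypothesis $c\neq 1$ is carried through from Proposition~\ref{LPP} and there is nothing more to prove about it).

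First, I would factor out the smaller Frobenius power:
\[
F(x)=x^{p^j}-ax^{p^i}=x^{p^i}\bigl(x^{p^j-p^i}-a\bigr)=x^{p^i}\Bigl((x^{p^i})^{p^{j-i}-1}-a\Bigr),
\]
using $p^j-p^i=p^i(p^{j-i}-1)$. Hence $F(x)=0$ with $x\neq 0$ is equivalent to $y^{p^{j-i}-1}=a$ where $y=x^{p^i}$.

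Next, I would invoke the fact that the Frobenius map $x\mapsto x^{p^i}$ is a bijection of $\mathbb{F}_{p^n}$, so as $x$ ranges over $\mathbb{F}_{p^n}^*$, $y=x^{p^i}$ also ranges over $\mathbb{F}_{p^n}^*$. Therefore $F$ has a nonzero root in $\mathbb{F}_{p^n}$ if and only if $a$ lies in the image of the map $y\mapsto y^{p^{j-i}-1}$ on $\mathbb{F}_{p^n}^*$, i.e.\ if and only if $a$ is a $(p^{j-i}-1)$-st power in $\mathbb{F}_{p^n}$.

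Combining the two steps: $F$ has zero as its unique root in $\mathbb{F}_{p^n}$ if and only if $a$ is not a $(p^{j-i}-1)$-st power in $\mathbb{F}_{p^n}$; by Proposition~\ref{LPP}, together with the requirement $c\neq 1$, this is exactly the statement that $F$ is perfect $c$-nonlinear. Since every step is either a direct factorization or a bijection argument, I expect no serious obstacle; the only mild care needed is to note that the exponent $p^i$ is coprime to $p^n-1$, which is what makes the Frobenius map bijective and allows the reduction from $(p^i(p^{j-i}-1))$-st powers to $(p^{j-i}-1)$-st powers.
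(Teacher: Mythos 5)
Your proof is correct and follows essentially the same route as the paper: reduce via Proposition~\ref{LPP} to the condition that $0$ is the only root of $F$, then use the factorization $F(x)=x^{p^i}\bigl((x^{p^i})^{p^{j-i}-1}-a\bigr)$ and the bijectivity of the Frobenius map. If anything, you spell out both directions of the equivalence, whereas the paper's one-line proof only states the sufficiency of the non-power condition explicitly.
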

\begin{proof} 
If $a$ is not a $(p^{j-i}-1)$-th power in $\mathbb{F}_{p^n}$ then the only root of $F(x)$ in $\mathbb{F}_{p^n}$ is $0$ and hence $F(x)$ is a linearized permutation polynomial and the result follows from Proposition~\ref{LPP}.
\end{proof}

It is not a simple matter to characterize when a perturbation of a function with some specific property is preserved.  We can, however, characterize when the sum of a PcN and an arbitrary $p$-ary  function is also PcN (for $1\neq c\in \f_p$), thus extending  in some direction the previous corollary. 
\begin{thm}
\label{thm:sumPcN}
Let $1\neq c\in\F_p$ be fixed, $p$ odd. Let $F$ be a perfect $c$-nonlinear function, and $f$ be an arbitrary $p$-ary function, both on $\f_{p^n}$. Then,
$F+\gamma f$ is perfect $c$-nonlinear if and only if for any $\lambda\in\f_{p^n}$ with $\Tr(\gamma\lambda)=\beta\in\f_p^*$, the following is true 
\[
\cW_{R_a}(-\lambda,\beta)=\displaystyle \sum_{y\in\f_{p^n}} \zeta^{\Tr(\beta R_a(y)+\lambda y)}=0,
\]
where $\zeta$ is a $p$-root of unity, $R_a=H_a\circ G^{-1}$, ${_c}D_af(x)=\Tr(H_a(x))$ ($H_a$ is non-unique) and $G^{-1}$ is the compositional inverse of $G={_c}D_a F$.
\end{thm}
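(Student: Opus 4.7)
The plan is to rewrite the $c$-derivative of the perturbation, reduce the permutation question to a character-sum vanishing condition, and identify that sum with a Walsh transform value. As a first step, I would compute
\[
{_c}D_a(F+\gamma f)(x) \;=\; {_c}D_aF(x)+\gamma\cdot{_c}D_af(x) \;=\; G(x)+\gamma\,\Tr(H_a(x)),
\]
where $G:={_c}D_aF$ is a permutation of $\f_{p^n}$ because $F$ is PcN, and $H_a\colon\f_{p^n}\to\f_{p^n}$ is any preimage of the $\F_p$-valued function ${_c}D_af$ under the (surjective, hence non-injective) trace map. Substituting $y=G(x)$ then shows that $F+\gamma f$ is PcN if and only if, for every $a\in\f_{p^n}$, the map
\[
P_a(y)\;:=\;y+\gamma\,\Tr(R_a(y)),\qquad R_a:=H_a\circ G^{-1},
\]
is a permutation of $\f_{p^n}$.

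Next I would apply the standard character-sum criterion: $P_a$ is a permutation if and only if $S(\lambda):=\sum_{y\in\f_{p^n}}\zeta^{\Tr(\lambda P_a(y))}=0$ for every nonzero $\lambda\in\f_{p^n}$. Since $\Tr(R_a(y))\in\F_p$ and $\Tr$ is $\F_p$-linear, one has
\[
\Tr\bigl(\lambda\,P_a(y)\bigr) \;=\; \Tr(\lambda y)+\Tr(\lambda\gamma)\cdot\Tr(R_a(y)) \;=\; \Tr(\lambda y)+\Tr\bigl(\beta R_a(y)\bigr),
\]
where $\beta:=\Tr(\gamma\lambda)\in\F_p$. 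Comparing with the definition of the Walsh transform given in Section~\ref{S1}, this gives $S(\lambda)=\cW_{R_a}(-\lambda,\beta)$.

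The conclusion then comes from splitting on the value of $\beta$. If $\beta=0$, the sum collapses to $\sum_y\zeta^{\Tr(\lambda y)}$, which automatically vanishes for every $\lambda\neq 0$ by standard orthogonality, so those $\lambda$ impose no condition. The remaining $\lambda$ are precisely those with $\Tr(\gamma\lambda)=\beta\in\F_p^*$, and for each such $\lambda$ the permutation criterion forces $\cW_{R_a}(-\lambda,\beta)=0$. Quantifying over all $a$ yields exactly the stated equivalence.

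The main obstacle I expect lies in the second step: one must carefully use the $\F_p$-linearity of $\Tr$ twice, first to factor the inner trace $\Tr(R_a(y))\in\F_p$ out of $\Tr(\lambda\gamma\,\Tr(R_a(y)))$, and then to reabsorb $\beta$ into the argument as $\Tr(\beta R_a(y))$ so that the sum is recognized as a Walsh coefficient. One should also briefly check that the condition is independent of the (non-unique) lift $H_a$: any two lifts differ by a function taking values in $\ker\Tr$, and since $\beta\in\F_p$, the quantity $\Tr(\beta R_a(y))$ is unaffected, so the claimed criterion is well posed.
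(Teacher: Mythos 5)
Your argument is correct, and its first step---rewriting ${_c}D_a(F+\gamma f)(x)$ as $G(x)+\gamma\Tr(H_a(x))$ with $G={_c}D_aF$ a permutation because $F$ is PcN---is exactly the reduction the paper makes. The difference lies in how the permutation criterion is then obtained: the paper simply invokes Charpin--Kyureghyan \cite[Theorem 2]{CK09}, which states verbatim that $G(x)+\gamma\Tr(H(x))$ is a permutation polynomial if and only if $\cW_{R}(-\lambda,\beta)=0$ for every $\lambda$ with $\Tr(\gamma\lambda)=\beta\in\F_p^*$, where $R=H\circ G^{-1}$; you instead re-derive this criterion from scratch by substituting $y=G(x)$, applying the standard character-sum test for permutations to $P_a(y)=y+\gamma\Tr(R_a(y))$, and using the $\F_p$-linearity of the trace to split $\Tr(\lambda P_a(y))$ into $\Tr(\lambda y)+\Tr(\beta R_a(y))$, the characters with $\beta=0$ vanishing automatically by orthogonality. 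This makes your proof self-contained where the paper's is essentially a citation, and your closing remark that the condition is independent of the non-unique lift $H_a$ (two lifts differ by a $\ker\Tr$-valued function, which leaves $\Tr(\beta R_a(y))$ unchanged for $\beta\in\F_p$) is a worthwhile well-posedness check that the paper leaves implicit. The two arguments are equivalent in substance; yours buys transparency at the cost of a few extra lines.
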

\begin{proof}
Certainly, $F+\gamma f$ is PcN if and only if
\begin{align*}
&F(x+a)+\gamma f(x+a) - cF(x)-c\gamma f(x)\\
&=F(x+a)-cF(x)+\gamma (f(x+a)-cf(x)) \\
&={_c}D_a F(x)+\gamma\cdot {_c}D_a f(x)
\end{align*}
is a permutation polynomial. 

We now write ${_c}D_af(x)=\Tr(H_a(x))$, for some (non-unique) function $H_a$ on $\F_{p^n}$ (since $c\in\F_p$,  if $f$ is $p$-ary, then ${_c}D_af$ is $p$-ary, and such $H_a$ does exists). We then use~\cite[Theorem 2]{CK09}, which states that if $G$ is a permutation and $H$ is arbitrary, then $G(x)+\gamma\Tr(H(x))$ is a permutation polynomial if and only if for any $\lambda\in\F_{p^n}$ with $\Tr(\gamma\lambda)=\beta\in\f_p^*$ then
$
\displaystyle \sum_{y\in\f_{p^n}} \zeta^{\Tr(\beta R(y)+\lambda y)}=0,
$
where $R=H\circ G^{-1}$. Our theorem is shown.
\end{proof}

What can we say about a Boolean perturbation of a non-permutation? 
Let $F=L+\gamma f$. From~\cite[Proposition 3]{CK09}, we know that if $F$ is a PP then the linearized polynomial $L$ on $\F_{p^n}$ must be a permutation or a $p$-to-1  map (surely, in general a linearized polynomial can have a kernel with dimension higher than~$1$, but the quoted result shows that if $L$ is a $p^s$-to-1 ($s>1$) function, then $F$ cannot be a PP). We denote by $\im(L)=\{L(x)\,|\, x\in \F_{p^n}\}$, the image of the map $L$.
If $L$ is a permutation polynomial, then Theorem~\ref{thm:sumPcN} applies, so we consider the case of a $p$-to-1 linearized polynomial.
\begin{thm}
Let $1\neq c\in \F_{p}$, $L$ be a $p$-to-$1$   linearized polynomial on $\F_{p^n}$ and $f$ an arbitrary $p$-ary function, and let $F=L+\gamma f$ be a permutation polynomial. 
Then  $F=L+\gamma f$  is perfect $c$-nonlinear  if and only if  both of the following conditions are satisfied for all $a\in\F_{p^n}^*$:
\begin{itemize}
\item[$(i)$] $\gamma \not\in {\rm Im} (L)$;
\item[$(ii)$] ${_c}D_a f(x+\epsilon)-{_c}D_a f(x) \ne 0$, for all $x\in\F_{p^n}$, $\epsilon\in \Ker (L)^*$.
\end{itemize}
\end{thm}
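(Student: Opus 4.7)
The plan is to reduce the PcN condition for $F$ to a permutation-polynomial condition for the associated $c$-derivatives, and then exploit the very rigid structure that a $p$-to-$1$ linearized polynomial enjoys.

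First I would verify (i) directly from the hypothesis. Since $\im(F)\subseteq \im(L)+\gamma\F_p$ and $\im(L)$ has $\F_p$-codimension $1$ in $\F_{p^n}$, the surjectivity of $F$ forces $\gamma\notin\im(L)$. Next, using that $L$ is linearized and $c\in\F_p$, I would compute
\[
{_c}D_a F(x)=L(x+a)-cL(x)+\gamma\,{_c}D_a f(x)=(1-c)L(x)+L(a)+\gamma\,{_c}D_a f(x).
\]
Because $F$ is a permutation and $1-c\in\F_p^*$, the case $a=0$ is automatic, so $F$ is PcN iff for every $a\in\F_{p^n}^*$ the map $\widetilde L(x)+\gamma\,g_a(x)$ is a PP, where $\widetilde L:=(1-c)L$ and $g_a:={_c}D_a f$. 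Note that $\widetilde L$ has the same kernel and image as $L$, and $g_a$ is again $p$-ary since $c\in\F_p$.

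The heart of the argument is the following claim: given $\gamma\notin\im(L)$, the map $\widetilde L+\gamma g_a$ is a PP if and only if $g_a(x+\epsilon)\neq g_a(x)$ for every $x\in\F_{p^n}$ and every $\epsilon\in\Ker(L)^*$. To prove it, suppose $\widetilde L(x)+\gamma g_a(x)=\widetilde L(y)+\gamma g_a(y)$. Then
\[
\widetilde L(x-y)=\gamma\bigl(g_a(y)-g_a(x)\bigr)\in \gamma\F_p.
\]
The $\F_p$-subspace $\im(L)$ has dimension $n-1$ and $\gamma\F_p$ has dimension $1$, and the assumption $\gamma\notin\im(L)$ forces $\im(L)\cap\gamma\F_p=\{0\}$. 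Hence $\widetilde L(x-y)=0$ and $g_a(x)=g_a(y)$, which means $x-y\in\Ker(L)$. Consequently $\widetilde L+\gamma g_a$ fails to be injective precisely when some nonzero $\epsilon\in\Ker(L)$ and some $x\in\F_{p^n}$ satisfy $g_a(x+\epsilon)=g_a(x)$.

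Putting these ingredients together produces the stated equivalence: $F$ is PcN iff the displayed non-collision condition holds for $g_a={_c}D_a f$ for every $a\in\F_{p^n}^*$, which is condition (ii); together with the already-established (i), this is exactly what the theorem asserts. The only potentially delicate step is the dimension argument $\im(L)\cap\gamma\F_p=\{0\}$, which crucially uses that $L$ is $p$-to-$1$ (so $\im(L)$ has codimension exactly one) and that $\gamma\notin\im(L)$; beyond that, everything is a direct book-keeping of the $c$-derivative.
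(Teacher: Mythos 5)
Your proof is correct and follows essentially the same route as the paper: you reduce PcN-ness to the permutation property of $(1-c)L+\gamma\,{_c}D_a f$ (constants being irrelevant) and analyze collisions using that $\Ker(L)$ is one-dimensional and that $\gamma\notin\im(L)$ forces $\im(L)\cap\gamma\F_p=\{0\}$, which is exactly the paper's case split on whether $y-z$ lies in $\Ker(L)$. The only cosmetic difference is that you obtain condition $(i)$ from surjectivity of the permutation $F$ itself, whereas the paper deduces it from the image of ${_c}D_aF$ being trapped inside $\im(L)$; both rest on the same codimension-one observation.
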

\begin{proof}
Let $a\in\F_{p^n}^*$ and $1\neq c \in \F_p$. Notice that 
\begin{align*}
 {_c}D_a L(x)& = L(x+a)-cL(x)\\
 &=(1-c)L(x)+L(a)\\
 &=L((1-c)x+a).
\end{align*}
Therefore, $\im({_c}D_a L)\subseteq \im (L)$. Further, as we know, $F$ is perfect $c$-nonlinear if and only if 
\begin{align*}
{_c}D_aF(x)= {_c}D_a L(x)+\gamma\, {_c}D_a f(x)&=L((1-c)x+a)+\gamma (f(x+a)-cf(x))
\end{align*}
is a permutation polynomial. 
  
We now slightly modify the proof of~\cite[Theorem 4]{CK09}, since, as it is, it cannot be applied directly for our case. Further, observe that  
\[
{_c}D_aF(x)=
\begin{cases}
L((1-c)x+a)&\text{ if } f(x+a)-cf(x)=0;\\
L((1-c)x+a)+\gamma d &\text{ if } f(x+a)-cf(x)=d\in\F_{p}^*.
\end{cases}
\]
If $\gamma\in \im(L)$, then $\gamma=L(\alpha)$, $\alpha\in\F_{p^n}$, and for $d\in\F_{p}^*$, $\gamma d=dL(\alpha)=L(d\,\alpha)$. Therefore, the image set of ${_c}D_aF$ is contained in the image set of $L$. Consequently, ${_c}D_aF$ cannot be a permutation as $L$ is a $p$-to-$1$ function. Thus, we can assume that $\gamma\not\in\im(L)$. For any $\epsilon \in \Ker(L)^*$, we have
\begin{align*}
 &{_c}D_aF(x+\epsilon)-{_c}D_aF(x) \\
 &= L((1-c)(x+\epsilon)+a)- L((1-c)x+a)+ \gamma ({_c}D_a f(x+\epsilon)-{_c}D_a f(x)) \\
 &= L((1-c)\epsilon)+ \gamma ({_c}D_a f(x+\epsilon)-{_c}D_a f(x))\\
 &= \gamma ({_c}D_a f(x+\epsilon)-{_c}D_a f(x))
\end{align*}
Thus, if ${_c}D_aF$ is a permutation, then ${_c}D_a f(x+\epsilon)-{_c}D_a f(x)$ has to be non-zero for all $x \in \F_{p^n}$ and $\epsilon \in \Ker(L)^*$.

Conversely, we assume that $(i)$ and $(ii)$ hold. Let $y,z\in\F_{p^n}$ such that ${_c}D_aF(y)={_c}D_aF(z)$. Thus 
\begin{align*}
 {_c}D_aF(y)-{_c}D_aF(z) &= 0\\
 L((1-c)(y-z))+\gamma({_c}D_af(y)-{_c}D_af(z))&=0.
\end{align*}
Let $y-z= \epsilon$, then the above equation reduces to 
\[
  (1-c)L(\epsilon)+\gamma\left({_c}D_a f(z+\epsilon)-{_c}D_a f(z)\right)=0.
\]
If $\epsilon \in \Ker(L)$, then by condition $(ii)$, $\epsilon = 0$, forcing $y=z$. If $\epsilon\notin \Ker(L)$, then ${_c}D_a f(y)-{_c}D_a f(z)= \tilde d\in\F_p^*$, so $0=(1-c)L(y-z)+\gamma \tilde d$, contradicting the fact that $\gamma\not\in \im(L)$.
\end{proof}

We shall use below some results of~\cite[Theorem 3]{CK08} and~\cite[Corollary 1(i)]{CK09}.
\begin{thm}
\label{thm:CKcombined}
Let $p$ be a prime number, $\beta, \gamma \in \F_{p^n}$ and $H\in\F_{p^n}[x]$. Then the polynomial
\[
F(x)=x+\gamma\Tr(H(x^p-\gamma^{p-1}x)+\beta x)
\]
is a permutation polynomial if and only if $\Tr(\beta\gamma)\neq -1$.
\end{thm}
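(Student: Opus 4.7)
The strategy is to reduce the permutation question to a single scalar identity in $\F_p$ by exploiting the kernel of the linearized map $L(x):=x^p-\gamma^{p-1}x$. The case $\gamma=0$ is immediate, since then $F(x)=x$ and $\Tr(\beta\gamma)=0\neq -1$; I will assume $\gamma\neq 0$ throughout.

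The first step is to identify $\Ker(L)$. A direct check gives $L(\gamma t)=\gamma^p t-\gamma^p t=0$ for every $t\in\F_p$, and since the equation $x^{p-1}=\gamma^{p-1}$ has at most $p-1$ nonzero solutions in $\F_{p^n}$, this forces $\Ker(L)=\gamma\F_p$. This is really the only structural observation the argument requires.

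Next, I would suppose $F(x)=F(x')$ and exploit the rigid shape of $F$. Because the $\Tr$-term in $F$ takes values in $\F_p$, rearranging the equation $F(x)=F(x')$ immediately forces $x-x'=\gamma s$ for some $s\in\F_p$. By the kernel computation, $L(x)=L(x'+\gamma s)=L(x')$, so the $H(L(\cdot))$-contributions cancel and what remains collapses to
\[
s\bigl(1+\Tr(\beta\gamma)\bigr)=0 \quad \text{in } \F_p.
\]
If $\Tr(\beta\gamma)\neq -1$, this forces $s=0$, hence $x=x'$, so $F$ is injective and therefore a permutation. Conversely, if $\Tr(\beta\gamma)=-1$, I would verify directly, using $L(x'+\gamma s)=L(x')$ together with $\Tr(\beta\gamma s)=s\Tr(\beta\gamma)=-s$, that $F(x'+\gamma s)=F(x')$ for every $x'\in\F_{p^n}$ and every $s\in\F_p$, so $F$ is (at least) $p$-to-$1$ and fails to be a permutation.

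The main obstacle, modest as it is, lies in the first step: once $\Ker(L)=\gamma\F_p$ is in hand, the rest of the argument reduces to a short linear computation over $\F_p$ in which the hypothesis $\Tr(\beta\gamma)\neq -1$ appears precisely as the nondegeneracy of the resulting scalar equation.
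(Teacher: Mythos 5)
Your argument is correct, and every step checks out: $\Ker(x^p-\gamma^{p-1}x)=\gamma\F_p$ (for $\gamma\neq 0$), the shape of $F$ forces any collision $F(x)=F(x')$ to satisfy $x-x'=\gamma s$ with $s\in\F_p$ because the trace term is $\F_p$-valued, additivity of $x\mapsto x^p-\gamma^{p-1}x$ kills the $H$-contribution on that coset, and the collision condition collapses to $\gamma s\bigl(1+\Tr(\beta\gamma)\bigr)=0$; the converse direction and the trivial case $\gamma=0$ are handled properly. Note, however, that the paper does not prove this statement at all: it is quoted as a combination of Theorem 3 of Charpin--Kyureghyan (2008) and Corollary 1(i) of Charpin--Kyureghyan (2009), so there is no in-paper proof to compare against. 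What you have written is essentially the standard ``linear structure'' argument underlying those references: your computation shows precisely that $\gamma$ is a $\Tr(\beta\gamma)$-linear structure of the $p$-ary function $f(x)=\Tr\bigl(H(x^p-\gamma^{p-1}x)+\beta x\bigr)$ (a notion the paper defines in the introduction but never exploits explicitly), and that $x+\gamma f(x)$ permutes $\F_{p^n}$ if and only if that constant differs from $-1$. Compared with invoking the general Charpin--Kyureghyan machinery (which, for arbitrary $G(x)+\gamma\Tr(H(x))$, passes through character-sum criteria as in Theorem~\ref{thm:sumPcN}), your route is more elementary and self-contained, at the price of applying only to this specific family where the inner linearized polynomial annihilates the direction $\gamma\F_p$; it would be a perfectly acceptable replacement for the citation.
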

(Surely, if $p=2$, the trace condition is $\Tr(\beta\gamma)=0$.)
We are now ready to show the next result, where we construct a class of (linearized) polynomials that are PcN for every $c\neq 1$, in all characteristics.
\begin{prop}
Let $p$ be a prime number, $\alpha,\gamma\in\F_{p^n}$. Then $F(x)=x+\gamma\Tr(x^p-\alpha x)$ is   PcN for all $c\neq 1$ if and only if $\Tr(\gamma(1-\alpha))\neq -1$.
\end{prop}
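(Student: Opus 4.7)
The plan is to reduce the PcN property directly to the permutation property of $F$, and then apply Theorem~\ref{thm:CKcombined}. The key observation is that although $F$ is built from a trace, the particular shape of its nonlinear argument makes the $c$-derivative almost trivially collapse onto $F$ itself. Concretely, I would expand, using the $\F_p$-linearity of $\Tr$,
\begin{align*}
{_c}D_a F(x) &= (x+a) + \gamma\Tr\bigl((x+a)^p - \alpha(x+a)\bigr) - cx - c\gamma\Tr(x^p - \alpha x) \\
&= (1-c)x + \gamma(1-c)\Tr(x^p - \alpha x) + a + \gamma\Tr(a^p - \alpha a) \\
&= (1-c)F(x) + \bigl(a + \gamma\Tr(a^p - \alpha a)\bigr).
\end{align*}
Since $c\neq 1$, the scalar $1-c\in\F_p^*$ is invertible, so the affine map $x\mapsto (1-c)F(x)+\text{const}$ is a permutation of $\F_{p^n}$ if and only if $F$ is. This covers both $a=0$ (where ${_c}D_0F = (1-c)F$) and $a\neq 0$. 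Hence $F$ is PcN for every $c\neq 1$ if and only if $F$ is a permutation polynomial of $\F_{p^n}$.

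Next, I would match $F$ with the shape covered by Theorem~\ref{thm:CKcombined}. Taking $H(y)=y$ and $\beta = \gamma^{p-1}-\alpha$, one checks immediately that
\[
H(x^p - \gamma^{p-1}x) + \beta x \;=\; x^p - \gamma^{p-1}x + (\gamma^{p-1}-\alpha)x \;=\; x^p - \alpha x,
\]
so $F(x) = x + \gamma\Tr\bigl(H(x^p - \gamma^{p-1}x) + \beta x\bigr)$. Theorem~\ref{thm:CKcombined} then states $F$ is a permutation if and only if $\Tr(\beta\gamma) \neq -1$ (with the convention $\Tr(\beta\gamma)=0$ replacing $\neq -1$ when $p=2$, which is the same condition since $-1=1$ in $\F_2$).

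Finally, I would simplify the trace condition using $\Tr(\gamma^p)=\Tr(\gamma)$:
\[
\Tr(\beta\gamma) \;=\; \Tr\bigl((\gamma^{p-1}-\alpha)\gamma\bigr) \;=\; \Tr(\gamma^p - \alpha\gamma) \;=\; \Tr(\gamma) - \Tr(\alpha\gamma) \;=\; \Tr(\gamma(1-\alpha)).
\]
Combining with the first step, $F$ is PcN for every $c\neq 1$ if and only if $\Tr(\gamma(1-\alpha))\neq -1$, which is the claim. There is no real obstacle here beyond recognizing the correct substitution $(H(y),\beta) = (y,\gamma^{p-1}-\alpha)$ that matches $F$ to the hypothesis of Theorem~\ref{thm:CKcombined}; once that is spotted, the argument is essentially a short computation, and it applies uniformly in all characteristics, including $p=2$.
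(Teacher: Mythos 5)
Your proof is correct and follows essentially the same route as the paper: expand the $c$-derivative to see ${_c}D_aF=(1-c)F+\mathrm{const}$, reduce perfect $c$-nonlinearity to $F$ being a permutation, and apply Theorem~\ref{thm:CKcombined} with $H(y)=y$, $\beta=\gamma^{p-1}-\alpha$, finishing with $\Tr(\gamma^p)=\Tr(\gamma)$. One cosmetic slip: for $c\in\F_{p^n}\setminus\F_p$ the scalar $1-c$ lies in $\F_{p^n}^*$ rather than $\F_p^*$, but your argument only uses that it is nonzero, so nothing breaks.
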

\begin{proof}
The $c$-differential of $F$ at $a$ is now
\allowdisplaybreaks
\begin{align*}
{_c}D_a F(x)&=F(x+a)-cF(x)\\
&=x+a+\gamma\Tr\left(x^p+a^p-\alpha x-\alpha a \right)-cx-\gamma c\Tr(x^p-\alpha x)\\
&=(1-c)x+(1-c)\gamma\Tr(x^p-\alpha x) +a+\gamma\Tr(a^p-\alpha a).
\end{align*}
Thus, $F$ is PcN if and only if  $(1-c)x+(1-c)\gamma\Tr(x^p-\alpha x) +a+\gamma\Tr(a^p-\alpha a)$ is PP for all $a$, which is equivalent to 
 $(1-c)x+(1-c)\gamma\Tr(x^p-\alpha x)$ being a PP, and further, $x+\gamma\Tr(x^p-\alpha x)$ being a PP. Now, we re-write the previous function as 
 $x+\Tr\left(x^p-\gamma^{p-1} x+(\gamma^{p-1}-\alpha)x\right)$. Using Theorem~\ref{thm:CKcombined} with $\beta=\gamma^{p-1}-\alpha$, we see that the last claim will hold if and only if $\Tr\left(\gamma\left(\gamma^{p-1}-\alpha\right)\right)=\Tr\left(\gamma^p\right)-\Tr\left(\gamma\alpha\right)  =\Tr(\gamma(1-\alpha))\neq -1$.
\end{proof}

We saw that some modifications of PcN functions preserve their perfect $c$-nonlinearity. It surely makes sense to ask whether the $c$-differential uniformity is preserved through affine, extended affine or CCZ-equivalence~\cite{CCZ}. Given a function $F$, we call the set $\{\beta_{F,c}\,|\, c\in \F_{p^n}\}$, the {\em differential spectrum} of $F$.
We ask here the question of whether that the differential uniformity spectra is preserved under the A-equivalence, EA-equivalence, or CCZ-equivalence. Our guess was that it is not preserved by EA, nor CCZ-equivalence, and an easy computation via SageMath confirmed it: while $x^3$ has $c$-differential spectrum $[1,2,3]$, the EA-equivalent function $x^3+x^4$ has $c$-differential spectrum $[1,2,3,4]$, both on $\F_{2^4}$.

 It is not difficult to show that the differential spectrum is invariant under the (restricted to input) affine-equivalence (A-equivalence) (recall that $F,F'$ on $\F_{2^n}$ are restricted to input A-equivalent if $F'(x)=F\circ \cL (x)$, where $\cL$ is an affine permutation on $\F_{2^n}$), and we provide the argument next. The equation $F'(x+a)-c F'(x)=b$ is equivalent to  $(F\circ\cL)(x+a)-c(F\circ\cL)(x)=b$, that is $F(\cL(x)+\cL(a))-cF(\cL(x))=b$. Setting $\cL(x)=y,\cL(a)=\alpha$, the previous equation becomes $F(y+\alpha)-cF(y)=b$. Surely, any solution of  $F'(x+a)-c F'(x)=b$ is in one-to-one correspondence to a solution of $F(y+\alpha)-cF(y)=b$, since $\cL$ is invertible.
 
Since the CCZ-equivalence is more general than EA-equivalence, we shall concentrate on it.  Recall that two $(n,m)$-functions $F,F'$ from $\F_{p^n}$ to $\F_{p^m}$ are CCZ-equivalent if and only if their graphs $G_F=\{(x,F(x))\,|\, x\in\F_{p^n}\}$, $G_{F'}=\{(x,F'(x))\,|\, x\in\F_{p^n}\}$ are affine equivalent, that is, there exists an affine permutation $\cA$ on $\F_{p^n}\times \F_{p^m}$ such that $\cA(G_F)=G_{F'}$.

As in \cite{CCZ}, we use the  identification of  the elements in $\F_{p^n}$ with the elements in $\F_p^n$, and denote by $x$ both an element in $\F_{p^n}$ and the corresponding element in $\F_p^n$.
We first decompose the affine permutation $\cA$ as an affine block-matrix,
$\cA{\bf u}=\begin{pmatrix}
\cA_{11} &\cA_{12}\\
\cA_{21} &\cA_{22}
\end{pmatrix} {\bf u}+\colvec{c}{d}$, for an input vector $\bf u$, where $\cA_{11},\cA_{21}$,  $\cA_{12},\cA_{22}$ are $n \times n$ matrices with entries in $\F_{p}$, and $\colvec{c}{d}$ is a column vector in $\F_{p^{2n}}$ (just a reminder to the reader that EA-equivalence means that $\cA_{12}=0$ and (full-fledged) $A$-equivalence means that $\cA_{12}=\cA_{21}=0$).
Fix $c\in\F_{p^n}$, and let the $c$-differential system be written as
$y-x=a, F(y)-cF(x)=b$.
 Applying the affine permutation $\cA$ to $\colvec{a}{b}$ we get
 \allowdisplaybreaks
 \begin{align*}
 \begin{pmatrix}
\cA_{11} &\cA_{12}\\
\cA_{21} &\cA_{22}
\end{pmatrix}
\colvec{a}{b}&=
\begin{pmatrix}
\cA_{11} &\cA_{12}\\
\cA_{21} &\cA_{22}
\end{pmatrix}
\colvec{y-x}{F(y)-cF(x)}\\
&=
\begin{pmatrix}
\cA_{11} &\cA_{12}\\
\cA_{21} &\cA_{22}
\end{pmatrix}
\colvec{y}{F(y)}-
\begin{pmatrix}
\cA_{11} &\cA_{12}\\
\cA_{21} &\cA_{22}
\end{pmatrix}
\colvec{x}{cF(x)}\\
&=\colvec{y'}{F'(y')}-\begin{pmatrix}
\cA_{11} &c\cdot\cA_{12}\\
\cA_{21} &c\cdot\cA_{22}
\end{pmatrix}
\colvec{x}{F(x)}.
 \end{align*}
We see that it is not obvious how the second term can be transformed into a pair $\colvec{x'}{c^*F'(x')}$ of the graph $G_{F'}$, unless $F,F'$ are also CCZ-equivalent also via an affine transformation whose linear part is a constant multiple of $\begin{pmatrix}
\cA_{11} &c\cdot\cA_{12}\\
\cA_{21} &c\cdot\cA_{22}
\end{pmatrix}$.
We summarize this discussion in the next theorem.
\begin{thm}
Let $F,F'$ be CCZ-equivalent via an affine transformation $\cA=\begin{pmatrix}
\cA_{11} &\cA_{12}\\
\cA_{21} &\cA_{22}
\end{pmatrix}$ 
and also via 
$\begin{pmatrix}
\frac{1}{c^*}\cdot \cA_{11} &\frac{c}{c^*} \cdot\cA_{12}\\
\frac{1}{c^*}\cdot \cA_{21} &\frac{c}{c^*} \cdot\cA_{22}
\end{pmatrix}$. Then the $c$-differential uniformity of $F$ is the same as the $c^*$-differential uniformity of $F'$.
\end{thm}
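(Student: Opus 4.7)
The plan is to finish the computation that is already started in the excerpt just before the theorem statement, using the hypothesis that $F,F'$ are CCZ-equivalent via both matrices to convert the second column vector into a pair from $G_{F'}$ scaled by $c^*$.

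First I would fix $a,b\in\F_{p^n}$ and enumerate the pairs $(x,y)$ with $y-x=a$ and $F(y)-cF(x)=b$; the number of such pairs is ${_c}\Delta_F(a,b)$. As the excerpt does, applying the linear part $M=\begin{pmatrix}\cA_{11} &\cA_{12}\\ \cA_{21} &\cA_{22}\end{pmatrix}$ of $\cA$ to $\colvec{a}{b}=\colvec{y-x}{F(y)-cF(x)}$ gives
\[
M\colvec{a}{b}=M\colvec{y}{F(y)}-\begin{pmatrix}\cA_{11}&c\cdot\cA_{12}\\ \cA_{21}&c\cdot\cA_{22}\end{pmatrix}\colvec{x}{F(x)}.
\]
The first CCZ-equivalence sends $\colvec{y}{F(y)}\in G_F$ to $\colvec{y'}{F'(y')}\in G_{F'}$ (after adding the constant shift of $\cA$), defining a bijection $y\mapsto y'$.

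Next I use the second hypothesis: the matrix in the second term equals $c^*\cdot \cB$, where $\cB$ is the linear part of an affine permutation that also realizes CCZ-equivalence from $F$ to $F'$. Hence $\cB\colvec{x}{F(x)}$ plus the appropriate constant shift equals $\colvec{x''}{F'(x'')}$, defining a second bijection $x\mapsto x''$. Substituting, the right-hand side becomes $\colvec{y'}{F'(y')}-c^*\colvec{x''}{F'(x'')}$ up to a fixed constant vector (coming from the two translations of the affine maps). Consequently, setting $\colvec{a'}{b'}$ equal to $M\colvec{a}{b}$ plus that constant, the pair $(x,y)$ is a solution of the $c$-differential equation for $F$ at $(a,b)$ if and only if the corresponding pair $(x'',y')$ is a solution of
\[
y'-x''=a',\qquad F'(y')-c^*F'(x'')=b',
\]
that is, of the $c^*$-differential equation for $F'$ at $(a',b')$.

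Because the maps $y\mapsto y'$ and $x\mapsto x''$ are bijections of $\F_{p^n}$, this correspondence is bijective on the solution sets, so ${_c}\Delta_F(a,b)={_{c^*}}\Delta_{F'}(a',b')$. Finally, the affine map $\colvec{a}{b}\mapsto\colvec{a'}{b'}$ is a bijection of $\F_{p^n}\times\F_{p^n}$ (since it is $M$ followed by a translation, and $M$ is invertible as the linear part of an affine permutation), so taking the maximum over all admissible $(a,b)$ on one side equals the maximum over all admissible $(a',b')$ on the other; this yields $_c\Delta_F={_{c^*}}\Delta_{F'}$. The main obstacle to be careful about is bookkeeping of the two constant translation vectors (from $\cA$ and from its scaled companion) to make sure they combine into a single constant shift independent of $(x,y)$, and checking that the case $a=0$ (allowed when $c\neq 1$) is not pathological, both of which are routine once the bijection is set up.
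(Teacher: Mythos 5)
Your strategy is the one the paper itself intends: finish the displayed block-matrix computation by using the second CCZ-equivalence to rewrite the subtracted term, then transport solution counts through the induced bijections. However, the key identification fails as written. With the second equivalence realized by an affine map whose linear part is $\mathcal{B}=\frac{1}{c^*}\begin{pmatrix}\cA_{11}&c\,\cA_{12}\\ \cA_{21}&c\,\cA_{22}\end{pmatrix}$, you correctly obtain that the subtracted term equals $c^*\mathcal{B}\colvec{x}{F(x)}=c^*\colvec{x''}{F'(x'')}$ up to a constant; but then its \emph{first} coordinate is $c^*x''$, not $x''$. So the transformed system reads $y'-c^*x''=a'$, $F'(y')-c^*F'(x'')=b'$, not $y'-x''=a'$ as you assert. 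Counting its solutions amounts to counting $X$ with $F'(c^*X+a')-c^*F'(X)=b'$, which is in general not ${}_{c^*}\Delta_{F'}(a',b')$: for instance, for $F'(X)=X$ and $c^*\neq 0,1$ the $c^*$-differential count is always $1$, while the twisted system has $p^n$ solutions whenever $a'=b'$. So the step ``hence $(x'',y')$ solves the $c^*$-differential system of $F'$'' is a genuine gap; it is precisely the delicate point that the paper's preceding discussion glosses over by asking only for a \emph{constant multiple} of the twisted matrix.

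What would make your computation go through verbatim is a second equivalence whose linear part rescales only the bottom row, namely $\begin{pmatrix}\cA_{11}&c\,\cA_{12}\\ \frac{1}{c^*}\cA_{21}&\frac{c}{c^*}\cA_{22}\end{pmatrix}$: then the subtracted term becomes $\colvec{x''}{c^*F'(x'')}$ up to a constant, the first coordinate is unscaled, and the rest of your argument (the bookkeeping of the two translation vectors, the bijectivity of $x\mapsto x''$, $y\mapsto y'$, and of $\colvec{a}{b}\mapsto\colvec{a'}{b'}$ via the invertible $M$, and the remark about $a=0$) is sound and yields ${}_c\Delta_F(a,b)={}_{c^*}\Delta_{F'}(a',b')$, hence equality of the uniformities. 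As the hypothesis is stated (the whole matrix scaled by $1/c^*$), you would instead need an additional argument relating the maximum number of solutions of $F'(c^*X+a')-c^*F'(X)=b'$ to the $c^*$-differential uniformity of $F'$, and no such argument is supplied.
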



 With the above discussion, we see that the $c$-differential uniformity may change under EA or CCZ-equivalence. Keeping that in mind, we now switch directions a bit and ask whether we can perturb some APcN functions, via a linear/linearized map, thereby obtaining a PcN function.
This is in line with the long standing open question on whether some of the known PN or APN functions can be transformed into PN or APN permutation functions by perturbing them via some linear mapping.  We will only treat here the Gold case, $F(x)=x^{p^k+1}$. From~\cite{CDU} we know that $F$ is PcN only for $c=1$ (under $\frac{n}{\gcd(n,k)}$ odd), when $p>2$, and it is never PcN for $c\neq 1$. The case of $p=2$ was treated in~\cite{RS20}.

\begin{thm}
\label{thm:perturb}
 Let $k\geq 1,n\geq 2$ be  integers, $p$   prime, $c\neq 1$ in~$\F_{p^n}$. The following are true:
\begin{enumerate}
\item[$(i)$]
If $G_1(x)=x^{p^k+1}+\gamma\Tr(x)$ is perfect $c$-nonlinear for $\gamma\in\F_{p^n}^*$, then 
\[
\gamma\not\in\left \{-\frac{a^{p^k+1}}{  \Tr\left(\frac{a}{1-c}\right) (1-c)^2} \,{\bigg |}\, a\in\F_{p^n}^*, \Tr\left(\frac{a}{1-c}\right) \neq 0
\right\}.
\]
\item[$(ii)$] 
The function $G_2(x)=x^{p^k+1}+\gamma x^{p^k}$ is never PcN, regardless of the value of~$\gamma\in\F_{p^n}^*$.
\end{enumerate}
\end{thm}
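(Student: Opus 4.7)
My plan is to prove both parts by producing explicit collisions that obstruct the relevant $c$-derivative from being a permutation, and the two parts will be handled independently.

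For part~$(i)$ I would begin with the trivial observation that ${_c}D_0 G_1(x)=(1-c)G_1(x)$; since $a=0$ is permitted in the PcN definition whenever $c\neq 1$, PcN of $G_1$ forces $G_1$ itself to be a permutation of $\F_{p^n}$. Therefore it suffices to exhibit $u\in\F_{p^n}^{*}$ with $G_1(u)=G_1(0)$, i.e.,
\[
u^{p^k+1}+\gamma\Tr(u)=0.
\]
The natural candidate is $u:=a/(1-c)\neq 0$; with the prescribed $\gamma$ one computes
\[
u^{p^k+1}+\gamma\Tr(u)=\frac{a^{p^k+1}}{(1-c)^{p^k+1}}-\frac{a^{p^k+1}}{(1-c)^2},
\]
which vanishes by the identity $(1-c)^{p^k+1}=(1-c)^2$. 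This supplies the collision $G_1(u)=G_1(0)$, destroys the permutation property of $G_1$, and hence rules out PcN.

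For part~$(ii)$ the strategy is to eliminate the $x^{p^k}$-term in ${_c}D_aG_2$ by a careful choice of $a$. Direct expansion gives
\[
{_c}D_a G_2(x)=(1-c)x^{p^k+1}+\bigl(a+(1-c)\gamma\bigr)x^{p^k}+a^{p^k}x+a^{p^k+1}+\gamma a^{p^k},
\]
and taking $a:=-(1-c)\gamma$, which is nonzero since $\gamma\neq 0$ and $c\neq 1$, kills the $x^{p^k}$-coefficient. The remaining map is a Gold monomial plus an $\F_p$-linear term plus a constant; imposing ${_c}D_a G_2(u)={_c}D_a G_2(0)$ collapses to
\[
u\bigl((1-c)u^{p^k}+a^{p^k}\bigr)=0,
\]
whose unique nontrivial root is the $u$ with $u^{p^k}=-a^{p^k}/(1-c)$. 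Such a $u$ exists because $x\mapsto x^{p^k}$ is a bijection of $\F_{p^n}$ and the right-hand side is nonzero, yielding the collision ${_c}D_aG_2(u)={_c}D_aG_2(0)$ and defeating PcN for every $\gamma\in\F_{p^n}^{*}$.

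The main delicate point sits in part~$(i)$, namely justifying $(1-c)^{p^k+1}=(1-c)^2$; this is valid precisely when $c\in\F_p$, which the denominator $(1-c)^2$ in the excluded set clearly signals is the intended setting (matching the convention of several earlier results in the paper). Part~$(ii)$, by contrast, is uniform in $c\in\F_{p^n}\setminus\{1\}$ and uses only the bijectivity of the Frobenius $x\mapsto x^{p^k}$, so no such restriction is needed there.
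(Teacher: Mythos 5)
Your proof of part $(ii)$ is correct and is essentially the paper's argument: the same choice $a=\gamma(c-1)$ kills the $x^{p^k}$-term, and the nonzero collision with $x=0$ follows from the bijectivity of $x\mapsto x^{p^k}$ (the paper phrases this via the substitution $y=1/x$, but the content is identical).

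Part $(i)$, however, has a genuine gap. Your whole argument rests on the identity $(1-c)^{p^k+1}=(1-c)^2$, i.e.\ $(1-c)^{p^k-1}=1$, which holds only when $1-c\in\F_{p^{\gcd(k,n)}}$ and fails for generic $c\in\F_{p^n}\setminus\F_p$. The theorem is stated for arbitrary $c\neq 1$ in $\F_{p^n}$, not for $c\in\F_p$; nothing in the statement or the surrounding section restricts $c$ to the prime field, and the factor $(1-c)^2$ in the excluded set arises for general $c$ from the collision condition in the $c$-derivative, not from a prime-field convention. Moreover, your route cannot be repaired by a cleverer choice of $u$: you reduce failure of PcN-ness to ``$G_1$ is not a permutation'' (the derivative at $a=0$), but whether $G_1$ permutes $\F_{p^n}$ is independent of $c$, whereas the excluded set depends on $c$ through $\Tr\left(\frac{a}{1-c}\right)$ and $(1-c)^2$; for $c\notin\F_p$ the obstruction need not be visible at $a=0$ at all. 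The paper instead works with the derivative at the nonzero $a$ realizing $\gamma$ in the set: after dividing ${_c}D_aG_1(x)=b$ by $1-c$ and relabeling the constant, the equation $x^{p^k+1}+\frac{a}{1-c}x^{p^k}+\frac{a^{p^k}}{1-c}x+\gamma\Tr(x)=0$ has the two distinct solutions $x=0$ and $x=-\frac{a}{1-c}$ exactly when $\Tr\left(\frac{a}{1-c}\right)=-\frac{a^{p^k+1}}{\gamma(1-c)^2}$, which is precisely membership of $\gamma$ in the displayed set, and no Frobenius-invariance of $1-c$ is needed. As written, your argument proves the claim only in the special case $(1-c)^{p^k}=1-c$ (e.g.\ $c\in\F_p$), so it does not establish the stated theorem.
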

 \begin{proof}
(i) We first perturb $F$ in the following way $G_1(x)=F(x)+\gamma \Tr(x)$, $\gamma\neq 0$, and attempt to find some condition on $\gamma$ such that $G_1$ can potentially be PcN. We look at the $c$-differential equation of $G_1$, namely
\[ 
(1-c) x^{p^k+1}+a\, x^{p^k}+a^{p^k} x+a^{p^k+1}+\gamma(1-c)\Tr(x)+\gamma \Tr(a)=b,
\] 
that is,
\begin{align*}
&  x^{p^k+1}+\frac{a}{1-c} x^{p^k}+\frac{a^{p^k}}{1-c} x+ \gamma \Tr(x)=\frac{b-\gamma\Tr(a)-a^{p^k+1}}{1-c}.
\end{align*}
By relabeling (since the free term is linear in $b$), it will be sufficient to investigate the equation
\[
 x^{p^k+1}+\frac{a}{1-c} x^{p^k}+\frac{a^{p^k}}{1-c} x+ \gamma \Tr(x)=b.
\]
We argue now that in many instances the equation has more than one solution. We let  $b=0$. Surely, $x=0$ is one such solution. We write (for $a\neq0$)
\begin{align*}
x^{p^k}\left( x+\frac{a}{1-c}\right) + \frac{a^{p^k}}{1-c} \left(x+ \frac{\gamma(1-c)}{a^{p^k}} \Tr(x)\right)=0.
\end{align*}
Now, $x=-\frac{a}{1-c}\neq 0$ is another solution if $ \frac{\gamma(1-c)}{a^{p^k}} \Tr\left(-\frac{a}{1-c}\right)=\frac{a}{1-c}$, or, equivalently, $ \Tr\left(\frac{a}{1-c}\right)=-\frac{a^{p^k+1}}{\gamma (1-c)^2}$. We obviously need $\frac{a^{p^k+1}}{\gamma (1-c)^2}\in\F_p^*$, for some $a$, which is equivalent to  the first claim.

(ii) Next, we perturb $F$ as $G_2(x)=F(x)+\gamma\, x^{p^k}$, $\gamma\neq 0$.
As before, the $c$-differential equation of $G_2$ is then
\[
(1-c) x^{p^k+1}+a\, x^{p^k}+a^{p^k} x+a^{p^k+1}+\gamma((1-c)x^{p^k}+a^{p^k})=b,
\]
or, by relabeling $\frac{b-a^{p^k+1}-\gamma a^{p^k}}{1-c}\mapsto b$
\[
x^{p^k+1}+\frac{a+\gamma(1-c)}{1-c}\, x^{p^k}+\frac{a^{p^k} }{1-c} x=b.
\]
If $b=0$, then $x=0$ is a solution. Assuming $b=0,x\neq 0,a\neq0$, factoring out $x$, and using $y=\frac{1}{x}$, we get 
\[
y^{p^k}+\frac{a+\gamma(1-c)}{a^{p^k}}\, y+\frac{1-c}{a^{p^k}}=0.
\]
It is easy to show that   taking $a=\gamma(c-1)$, then $y=\left(\frac{c-1}{a^{p^k}} \right)^{p^{-k}}$ (which always exists, since $\gcd(p^k, p^n-1)=1$) is a solution of the above equation, and hence $x=\left(\frac{a^{p^k}}{c-1} \right)^{p^{-k}}$ is a solution of the original equation in~$x$. Hence ${_c}D_a G_2$ is not a permutation, and therefore, $G_2$ is not PcN, for $c\neq 1$. 
\end{proof}

Surely, the question is whether $G_1(x)=x^{2^k+1}+\gamma \Tr(x)$ is ever PcN over $\F_{2^n}$.
We quickly took some small examples of $\F_{2^n}$,
$2\leq n\leq 4$, determined by the primitive polynomials $x^2+x+1$, $x^3+x+1$, $x^4+x+1$ over $\F_2$, all with  some primitive root $\alpha$. We then
  checked that  $G_1(x)=x^{2^k+1}+\gamma \Tr(x)$ is never PcN  on $\F_{2^n}$, for $2\leq k<n\leq 4$. If $k=n$, we can get PcN functions. For the considered cases,  if $(k,n)=(2,2)$, $G_1$ is PcN when $(c,\gamma)=(0,1), (\alpha,1),( \alpha^2,1)$; if $(k,n)=(3,3)$, $G_1$ is PcN when $(c,\gamma)=(c,\alpha), (c,\alpha^2), (c,\alpha^4)$, since the function $G_1$ becomes a linearized polynomial (via $x^{2^n+1}=x^2$ on $\F_{2^n}$).  We do not have other examples for small dimensions. The computation was done via SageMath.

\section{Further comments}
\label{S7}

In this paper, in the first part, we used  Dickson polynomials techniques to show some results (some were independently shown recently). We also found that recently published necessary conditions, which give a relationship between the difference function of a monomial  and the Dickson polynomial of first kind (odd characteristic), are also sufficient (Theorem~\ref{maint}). Next, we give several classes of PcN and functions with low $(-1)$-differential uniformity, and we propose two conjectures based upon some computational data. We also obtain a class of polynomials that are PcN for all $c\neq 1$, in every characteristic.
Further, we discuss the affine, extended affine and CCZ-equivalence as it relates to $c$-differential uniformity.
We then concentrate on perturbation of a PcN function to also be perfect $c$-nonlinear and give necessary and sufficient conditions in some cases. We also show that in some instances such perturbations do not produce PcN functions.
Surely, it would be very interesting to find other perturbations, linear or not, that may decrease the $c$-differential uniformity. For example, one can computationally check that a ``switching'' technique~\cite{BKL09,YKP06} produces some PcN functions as well: the functions $x^3+\gamma\Tr(x^3),x^9+\gamma\Tr(x^3)$ are PcN on $\F_{2^3}$ for $c=0,\gamma=\alpha,\alpha^2,\alpha^4$.

 \vskip.3cm
\noindent
{\bf Acknowledgments}. The authors  would like to express their sincere appreciation for the reviewers’ careful reading, beneficial comments and suggestions, and to the editors for the prompt handling of our paper. The research of Sartaj Ul Hasan is partially supported by Start-up Research Grant SRG/2019/000295 from the Science and Engineering Research Board, Government of India.

 \end{document}